\documentclass[12pt]{article}

\usepackage{latexsym,amssymb,amsmath,bm}

\pagestyle{headings}

\newcommand{\C}{\mathbb C}
\newcommand{\R}{\mathbb R}
\newcommand{\Z}{\mathbb Z}
\newcommand{\Q}{\mathbb Q}
\newcommand{\N}{\mathbb N}
\newcommand{\F}{\mathbb F}
\newcommand{\E}{\mathbb E}
\newcommand{\A}{\mathbb A}
\newcommand{\PP}{\mathbb P}
\newcommand{\slz}{SL(2,\mathbb Z)}
\newcommand{\glz}{GL(2,\mathbb Z)}
\newcommand{\slw}{SL(2,\mathbb C)}
\newcommand{\glc}{GL(3,\mathbb C)}

\newcommand{\glt}{GL(3,\mathbb F)}
\newcommand{\sltf}{SL(2,\mathbb F)}
\newcommand{\gltf}{GL(2,\mathbb F)}
\newcommand{\glw}{GL(2,\mathbb C)}
\newcommand{\sma}{\left(\begin{array}}
\newcommand{\fma}{\end{array}\right)}
\newcommand{\injects}{\hookrightarrow}

\newtheorem{lem}{Lemma}[section]

\newtheorem{co}[lem]{Corollary}
\newtheorem{thm}[lem]{Theorem}
\newtheorem{prop}[lem]{Proposition}

\newtheorem{qu}[lem]{Question}
\newenvironment{proof}{\textbf{Proof.}}{\newline\hspace*{\fill}{$\Box$}\\}

\begin{document}
\title{Minimal dimension faithful linear representations of common
finitely presented groups}
\author{J.\,O.\,Button}

\newcommand{\Address}{{% additional braces for segregating \footnotesize
  \bigskip
  \footnotesize

\textsc{Selwyn College, University of Cambridge,
Cambridge CB3 9DQ, UK}\par\nopagebreak
  \textit{E-mail address}: \texttt{j.o.button@dpmms.cam.ac.uk}
}}

\date{}
\maketitle
\begin{abstract}
For various finitely presented groups, including right angled Artin groups
and free by cyclic groups, we investigate what is the smallest dimension
of a faithful linear representation. This is done both over $\C$ and over
fields of positive characteristic. In particular we show that Gersten's
free by cyclic group has no faithful linear representation of dimension
4 or less over $\C$, but has no faithful linear representation of any
dimension over fields of positive characteristic. 
\end{abstract}

\section{Introduction}

A common question to ask about a given finitely presented group is whether
it is linear. We have both algebraic and geometric arguments available
for proving linearity, for instance \cite{nis} and \cite{sha} in the first
case, whereas a group which is seen to be the fundamental group of a
finite volume hyperbolic orientable 3-manifold will embed in $\slw$.
There are also arguments for showing a group is not linear, with lack
of residual finiteness surely the most utilised. This means that our options
decrease drastically when trying to show that a residually finite group
is not linear.

However even if we know a group is linear, this still begs the question
of what dimension a faithful linear representation will be, and in
particular what is the minimum dimension of a faithful linear representation
of a given group. This is a question that has been considered for finite groups,
for instance \cite{babnn} which applies it to results on expansion of finite
groups (in fact the quantity used is the minimum dimension of a non trivial
linear representation, but this is the same as a faithful linear
representation for finite simple groups, which are the main examples
studied). 
Now it is clear that if an infinite group $G$
contains a finite subgroup $H$ having no low dimensional faithful linear
representation then the same is true of $G$, thus we obtain an 
potential obstruction. However nearly all of the finitely presented
groups considered in this paper will be torsion free, so this
observation will not help and we will need to establish our own
criteria for showing the non-existence of low dimensional faithful
representations, as well as producing existence results which will
generally be constructive.

One motivation for studying this question is that although linearity 
in general ought
to provide insight into the structure of a group, it is clear that a
faithful representation of low dimension will be a lot more useful in
practice, for instance computationally, than one which uses matrices of
vast size. Conversely proving that a given group has no low dimensional 
faithful linear representations can be seen as indicating that this group
has some measure of complexity in its internal structure.

So far we have been using the phrase ``low dimension'' without quantification,
however in this paper the term will mean dimension 4 or less. This fits in
with an old question credited to Thurston, asking if every finitely
presented 3-manifold group has a faithful 4-dimensional linear representation,
which we showed to be false in \cite{me3}. Also for matrices of this
size it is sometimes possible to provide direct hands on arguments for various
groups on the existence or not of a low dimensional faithful representation.
This will often entail looking at the centraliser of a particular element
and considering the possible Jordan normal forms.

We have also not yet specified our definition of a linear group, which we
would expect in theory to be a subgroup of $GL(d,\F)$ for some dimension
$d$ and {\it some} field $\F$, but in practice we are likely to be
thinking about the case when $\F$ is the complex numbers. In fact our
viewpoint will be the more general one. Indeed once it is pointed out that
for finitely generated groups, the existence of an embedding in
$GL(d,\F)$ for $\F$ any field of characteristic zero is equivalent to
the existence of an embedding in $GL(d,\C)$, it makes sense to group
together different fields with the same characteristic. Thus for a
finitely generated or presented group $G$ and a characteristic $p$
equal to zero or a prime, we will define $m_p(G)$ to be the minimum
dimension of a faithful linear representation of $G$ over some field
of characteristic $p$, with $m_0(G)$ then reducing to the case of $\C$.
This raises the possibility that the values of $m_p(G)$ could vary
with $p$, or at least could be different for $p=0$ and prime $p$,
although we will want as far as possible to come up with
existence and non-existence results which are independent of the 
characteristic. This we are able to do initially, although it becomes
harder as we move to more complicated examples.

We start in the next section with some basic observations (these are
independent of the characteristic); in particular we have an
obstruction for the existence of a faithful 3-dimensional embedding in 
Theorem \ref{5cm}, as well as quoting some results in the literature
that do establish linearity for (some amalgamated) free products. It
is well known that free groups and free abelian groups admit 2-dimensional
faithful representations, so Section 3 considers the obvious generalisation
of these groups, namely the much studied right angled Artin groups (RAAGs).
These are known to be linear (even over $\Z$ although we will not be
considering those particular representations here), at least in characteristic
zero. Although we do not know of a definitive result as to which 
finitely presented groups have faithful 2-dimensional representations,
it is straightforward to do this in Proposition \ref{2d}
for RAAGs in terms of their defining graphs. Moreover our results provide 
some evidence towards the equivalent of Thurston's question being true
for RAAGs in that, although we show in Section 2 that the direct product
$F_2\times F_2$ of rank 2 free groups requires at least 4 dimensions
for faithful representations, Corollary \ref{mainc} states that
the path with 4 vertices embeds into
$GL(3,\F)$ for some field $\F$ of any given characteristic. By a result
in \cite{kmkb} this means that all RAAGs whose defining graph is a tree
will also embed in $GL(3,\F)$.   

Section 4 is mainly a review of known results on the minimum 
dimension of faithful linear representations for groups commonly
occurring in geometry, such as braid groups, automorphism groups,
3-manifold groups and word hyperbolic groups.

For the rest of the paper we consider free by cyclic groups, which are 
of the form $F_n\rtimes_\alpha\Z$ for $F_n$ the rank $n$ free group
and $\alpha$ an automorphism of $F_n$. In Section 5 we look at the
case $n=2$. Here it is know that all such groups
are linear but we find the minimum dimension
for all but one family of groups. In particular Proposition \ref{finite}
shows that if $\alpha$ has finite order in $Out(F_2)$ then the minimum
dimension is 2.

However for general $n$ the question of whether $F_n\rtimes_\alpha\Z$ is
always linear is unknown (this is Problem 5 in \cite{ds}). A particularly
good test case is the Gersten group with $n=3$, where for $F_3$ free on
$a,b,c$ the automorphism $\alpha$ fixes $a$, sends $b$ to $ba$ and $c$ to 
$ca^2$. This was introduced in \cite{ger} and shown to admit no proper,
cocompact action by isometries on a CAT(0) space (nor can it be a subgroup
of a group with such an action). In Section 7
we show that this group has no faithful linear representation of dimension
4 or less over any field, so Thurston's question also has a negative
answer for
free by cyclic groups as well as 3-manifold groups. However, perhaps
more interestingly and certainly surprisingly, we show in Section 6 
that this group has no faithful
linear representation in any dimension over any field of positive
characteristic. This makes the question of linearity of the Gersten
group over $\C$ a crucial one, for either it behaves very differently in
zero characteristic than in positive characteristic, or we have a free by
cyclic group which is not linear over any field.  

\section{Preliminaries}
Given a group $G$ we will write $m(G)$ for 
the minimum value of the dimension $d$ such that 
there is some field $\F$ where $G$
admits a faithful representation into 
$GL(d,\F)$. Here we do allow $\F$ to be any field,
although we will find it useful to distinguish between
different characteristics. Consequently we also define 
$m_p(G)$ to be the minimum dimension $d$ of a faithful representation
into $GL(d,\F)$ over all fields $\F$ of characteristic $p$, 
where $p$ is zero or a prime, thus $m(G)=\mbox{min}_p\,m_p(G)$.
It can happen that $G$ is known to
have no such faithful representation in any dimension, whereupon
we will write $m(G)=\infty$ and/or $m_p(G)=\infty$,
or the existence of such a faithful linear representation 
could even be an open question.

In this paper all groups considered will be abstractly
finitely generated. Thus on first turning to the
characteristic zero case, we can take $\F=\C$ without loss of generality.
This is because we can consider the coefficient field 
$\E=\Q(x_1,\ldots ,x_n)$ where $x_1,\ldots ,x_n$ are all entries taken
from a finite generating set. As $\E$ is a finitely generated field
over $\Q$, it must be a finite extension of a purely transcendental
field of finite transcendence degree. But this latter field will
embed in $\C$ just by taking the right number of transcendentally
independent elements and then $\E$ will too because $\C$ is
algebraically closed. (Tao has referred to this in his blog as the
``Baby Lefschetz Principle''.) As for positive characteristic, although
$G$ would be a finite group if we take $\F$ to be the finite field
$\F_p$, or even if $\F$ is a locally finite field such as the
algebraic closure $\overline{\F_p}$ because $G$ is finitely generated,
we shall be taking fields
such as $\F_p(x_1,\ldots ,x_n)$ for $x_1,\ldots ,x_n$ transcendentally
independent elements or some finite extension of this. Indeed we can
invoke a similar ``Baby Lefschetz Principle'' by working over a
universal coordinate domain in prime characteristic $p$, that is an
algebraically closed field of infinite transcendence degree over
$\F_p$. 

In this paper we are interested in faithful linear representations
of low dimension, which here usually means 4 or less. Moreover we will be
considering groups that are finitely generated, indeed finitely
presented, and nearly always torsion free. To this end we first make some
well known background comments.   
In the 1-dimensional case, we merely note that $GL(1,\F)=\F^*$ is abelian
for any field $\F$
and that $\Z^n$ embeds in $\C^*$ for any $n$, indeed in $\F^*$ for
$\F$ some field of characteristic $p$ because
$\Z^n\cong\langle x_1,\ldots ,x_n\rangle\leq \F_p(x_1,\ldots ,x_n)^*$.
For linearity in
2 dimensions, a well known necessary condition
is that a subgroup $H$ of $SL(2,\F)$
not containing the element $-I_2$ has to be commutative transitive.
This does not quite hold in $GL(2,\F)$ but we can rephrase it
here as: if $H\leq GL(2,\F)$ but there exists $x,y,z\in H$ where $x$
commutes with $y$ and $y$ with $z$ but $xz\neq zx$ then $y=\lambda I_2$
for some scalar $\lambda\in\F^*$
and in particular $y$ must be in the centre of $H$. Well known finitely
generated groups
that embed in $GL(2,\C)$ are free groups $F_r$ of any rank $r$, closed
orientable surface groups $\pi_1(S_g)$ of any genus $g$,
the fundamental groups $\pi_1(M^3)$ of closed orientable hyperbolic
3-manifolds, 
all limit groups (thus generalising $F_r$ and $\pi_1(S_g)$),
and the fundamental group of a graph
of groups where the graph is a tree, the vertex groups are finite rank free, 
and the edge groups are maximal cyclic subgroups (by \cite{sha} Theorem 2).

As for faithful representations of these groups over fields of
positive characteristic, much less seems to be known. We certainly
have that $m_p(F_r)=2$ for $r\geq 2$ and
any prime $p$, for instance by \cite{wehbk} Exercise 2.2.
This then allows us to conclude (see for instance
\cite{lubseg} Window 8 Theorem 1) 
that $m_p(G)=2$ for any prime $p$ and any non-abelian limit
group $G$. Also we shall see in subsection 2.2 that \cite{sha} Theorem 2
goes through in positive characteristic, hence we again have $m_p(G)=2$
for the fundamental group of those graph of groups 
described above. However we
do not know of a general result for $\pi_1(M^3)$, or even that
$m_p(\pi_1(M^3))$ is always finite.

\subsection{In three dimensions}  
Moving now to three dimensional representations,
one consequence of the limited number of Jordan normal forms for 3 by
3 matrices is the following:
\begin{prop} \label{jnf}
If $\F$ is algebraically closed and
the centraliser of $A\in\glt$ is not soluble then $A$ is
diagonalisable with a repeated eigenvalue.
\end{prop}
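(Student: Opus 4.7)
The plan is to reduce to a case analysis using the Jordan normal form of $A$. Since conjugating $A$ by $P$ replaces its centraliser by $P^{-1}C_{\glt}(A)P$, solubility is unchanged, and since $\F$ is algebraically closed I may assume $A$ is in Jordan normal form. There are exactly six shapes of Jordan normal form for a $3\times 3$ matrix over $\F$: the scalar $aI$, the diagonal $\mathrm{diag}(a,a,b)$ with $a\neq b$, the diagonal $\mathrm{diag}(a,b,c)$ with three distinct eigenvalues, $J_2(a)\oplus(b)$ with $a\neq b$, $J_2(a)\oplus(a)$, and the single Jordan block $J_3(a)$. I would show that the centraliser is soluble in every case except the first two, and these first two are exactly the diagonalisable matrices with a repeated eigenvalue.

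Most cases are quick. For $A=aI$ the centraliser is all of $\glt$; for $\mathrm{diag}(a,a,b)$ it is the block direct product $GL(2,\F)\times\F^*$. In both cases solubility fails because $SL(2,\F)$ is not soluble when $\F$ is algebraically closed (every algebraically closed field is infinite, so $PSL(2,\F)$ is simple non-abelian). At the other extreme, when $A$ has three distinct eigenvalues the centraliser is the diagonal torus $(\F^*)^3$; when $A=J_3(a)$ it consists of the units in $\F[A]$; and when $A=J_2(a)\oplus(b)$ with $a\neq b$ the centraliser splits as a block direct product whose $2\times 2$ factor is the set of matrices of the form $\begin{pmatrix} x & y \\ 0 & x\end{pmatrix}$. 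All three of these groups are abelian.

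The delicate case, and the main obstacle, is $A=J_2(a)\oplus(a)$, which has a repeated eigenvalue but is not diagonalisable, so I need to show that its centraliser is nonetheless soluble and so this matrix is correctly excluded from the conclusion. A direct calculation (using that the centraliser of $A$ coincides with that of the nilpotent $A-aI$) shows the centraliser consists of the invertible matrices of the form $\begin{pmatrix} x & y & z \\ 0 & x & 0 \\ 0 & w & v \end{pmatrix}$. I would then project onto the diagonal entries, giving a surjection onto $(\F^*)^2$ whose kernel is the subgroup with $x=v=1$; after swapping the second and third basis vectors this kernel becomes a subgroup of upper triangular unipotent matrices, and so is nilpotent. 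The centraliser is therefore an extension of an abelian group by a nilpotent group, hence soluble. Verifying that the off-diagonal coupling between the size-two and size-one Jordan blocks does not smuggle in a non-soluble piece is really the crux of the proposition, and is what distinguishes this case from the visibly non-soluble case $\mathrm{diag}(a,a,b)$.
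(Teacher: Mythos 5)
Your proposal is correct and takes essentially the same approach as the paper: run through the Jordan normal forms and check that the centraliser is soluble in every case except a diagonalisable matrix with a repeated eigenvalue. The only (cosmetic) difference is in your delicate case $J_2(a)\oplus(a)$: the paper conjugates to the form with the single $1$ in the $(1,3)$ entry, so that the centraliser is visibly upper triangular with nilpotent derived subgroup, whereas you keep the standard Jordan form and obtain solubility via the projection onto the diagonal entries with unipotent (nilpotent) kernel.
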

\begin{proof} It is easily checked that the centraliser in $\glt$ of
\[A=\sma{ccc}\lambda&1&0\\0&\lambda&1\\0&0&\lambda\fma\mbox{ for }
\lambda\neq 0\mbox{ or }
\sma{ccc}\lambda&0&0\\0&\mu&0\\0&0&\nu\fma\]
for distinct $\lambda,\mu,\nu\neq 0$ is abelian. If
\[A=\sma{ccc}\lambda&0&1\\0&\lambda&0\\0&0&\lambda\fma\mbox{ for }
\lambda\neq 0\]
then its centraliser $C(A)$ in $GL(3,\F)$
consists of elements of the form
$\sma{ccc}a&b&c\\0&d&e\\0&0&a\fma$ for $a,d\neq 0$
and the commutator of any two
such elements is an upper unitriangular matrix, hence the derived
subgroup of $C(A)$ is nilpotent. Similarly if
$A=\sma{ccc}\lambda&1&0\\0&\lambda&0\\0&0&\mu\fma$
then the centraliser also consists of upper triangular
matrices.  As $\F$ is algebraically closed, any element is conjugate
to one in Jordan normal form and these (or a similar matrix) have all
been considered apart from diagonal matrices with a repeated eigenvalue.
\end{proof}

As an immediate corollary, we obtain
\begin{co} \label{jnfc} Suppose that a group $G$ contains commuting elements
$A$ and $B$ which have centralisers $C_G(A), C_G(B)\neq G$ that are both 
non soluble and which are not equal. 
Then if there is a faithful representation
of $G$ into $\glt$ for $\F$ any algebraically closed field,
we can conjugate the image in $\glt$ such that
\[A=\sma{ccc}\lambda&0&0\\0&\lambda&0\\0&0&\mu\fma
\mbox{ and }B=\sma{ccc}x&0&0\\0&y&0\\0&0&x\fma\]
for some non zero scalars $\lambda\neq\mu$, $x\neq y$.
\end{co}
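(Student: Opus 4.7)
Let $\rho\colon G \hookrightarrow \glt$ be a faithful representation; for brevity I use $A, B$ also for their images.  The key preliminary observation is that $\rho(g)$ commutes with $A$ if and only if $[g,A]\in\ker\rho = \{e\}$, giving the identification $C_G(A) = \rho^{-1}(C_{\glt}(A))$ (and similarly for $B$).  Consequently $C_{\glt}(A)$ contains the isomorphic copy $\rho(C_G(A))$ of the non-soluble group $C_G(A)$ and is itself non-soluble; Proposition \ref{jnf} then forces $A$ to be diagonalisable with a repeated eigenvalue, and the same for $B$.  Moreover $A$ cannot be a scalar matrix, else it would be central in $\glt$ and the identification above would force $C_G(A) = G$, contrary to hypothesis.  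Thus $A$ has exactly two distinct eigenvalues $\lambda\ne\mu$, and likewise $B$ has exactly two distinct eigenvalues $x\ne y$.

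Since $A$ and $B$ commute and both are diagonalisable, they can be simultaneously diagonalised by a single conjugation.  A further conjugation by a permutation matrix then places the distinct eigenvalue of $A$ in the third slot, giving $A = \sma{ccc}\lambda&0&0\\0&\lambda&0\\0&0&\mu\fma$; at the same time $B$ is diagonal with two equal and one distinct entry, the only remaining issue being which of the three positions holds its distinct entry.

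The essential step, and the only use made of the hypothesis $C_G(A)\ne C_G(B)$, is to rule out the case in which the distinct entry of $B$ also sits in position three, i.e.\ $B = \mathrm{diag}(x,x,y)$.  A direct calculation shows that the centraliser in $\glt$ of any diagonal matrix of the form $\mathrm{diag}(\alpha,\alpha,\beta)$ with $\alpha\ne\beta$ consists of all block-diagonal matrices with an invertible $2\times 2$ block in the top left and a nonzero entry in the bottom right; in particular this subgroup depends only on the position of the repeated eigenvalues, not on their values.  In the forbidden case we would therefore have $C_{\glt}(A) = C_{\glt}(B)$, which via the preimage identification would give $C_G(A) = C_G(B)$, contradicting the assumption.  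Hence the distinct entry of $B$ must lie in position one or two, and a final conjugation by the transposition $(1\,2)$, which preserves $A$, brings $B$ to $\mathrm{diag}(x,y,x)$ as claimed.
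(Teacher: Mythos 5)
Your proof is correct and follows essentially the same route as the paper: apply Proposition \ref{jnf} to deduce that $A$ and $B$ are diagonalisable with a repeated eigenvalue, simultaneously diagonalise, note neither can be scalar since their centralisers in $G$ are proper, and use $C_G(A)\neq C_G(B)$ to force the repeated eigenvalue into different positions. You merely spell out more explicitly (via the computation of $C_{\glt}(\mathrm{diag}(\alpha,\alpha,\beta))$ and the preimage identification of centralisers under a faithful representation) the step the paper compresses into one sentence.
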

\begin{proof}
By Proposition \ref{jnf} we know $A$ and $B$ are both diagonalisable
with a repeated eigenvalue and as they commute they must be
simultaneously diagonalisable. But $A$ and $B$ are not in the centre and
the centralisers are distinct so they both have exactly two distinct
eigenvalues, with the repeat appearing in a different place for $A$
and for $B$. Without loss of generality this gives the form above.
\end{proof}

This leads to the following result which we can use
as an obstruction for a faithful 3-dimensional embedding.
\begin{thm} \label{5cm}
Suppose a group $G$ has elements $A,B,C,D$ where $A$ commutes with
$B$, $B$ with $C$, $C$ with $D$ and $D$ with $A$. If both
$\langle A,C\rangle$ and $\langle B,D\rangle$ are not soluble then
$G$ cannot embed in $\glt$ for any field $\F$. 
\end{thm}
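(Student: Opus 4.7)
My plan is to argue by contradiction: assume $G$ embeds in $\glt$ with $\F$ algebraically closed (we may pass to the algebraic closure). The starting observation is that each of the four centralisers $C_G(A), C_G(B), C_G(C), C_G(D)$ contains one of $\langle A,C\rangle$ or $\langle B,D\rangle$ as a subgroup; for example, since $A$ commutes with both $B$ and $D$, we have $\langle B,D\rangle\leq C_G(A)$. Consequently all four centralisers are non-soluble. The same observation shows that none of $A,B,C,D$ can be scalar in $\glt$: if, say, $A$ were scalar then it would commute with $C$, making $\langle A,C\rangle$ abelian.

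I would then apply Corollary \ref{jnfc} to the commuting pair $(A,B)$. The conditions $C_G(A),C_G(B)\neq G$ and their non-solubility are immediate from the preceding observations; the only delicate check is $C_G(A)\neq C_G(B)$. If they agreed then both $C,D\in C_G(B)=C_G(A)$ would commute with $A$, making $\langle A,C\rangle$ abelian, contradicting the hypothesis. The corollary then furnishes a basis in which
\[
A=\sma{ccc}\lambda&0&0\\0&\lambda&0\\0&0&\mu\fma,\qquad
B=\sma{ccc}x&0&0\\0&y&0\\0&0&x\fma
\]
with $\lambda\neq\mu$ and $x\neq y$.

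Next I would pin down the shapes of $C$ and $D$. Since $D$ commutes with $A$ it preserves the splitting $\mathrm{span}(e_1,e_2)\oplus\mathrm{span}(e_3)$, so $D$ takes block form with some $2\times 2$ block $M_D$ and $(3,3)$ entry $\rho$. Proposition \ref{jnf} applied to $D$ (which has non-soluble centraliser) shows $D$ is diagonalisable with a repeated eigenvalue; if $M_D$ were a scalar multiple of $I_2$ then $D$ would be diagonal and hence commute with $B$, making $\langle B,D\rangle$ abelian. Thus $M_D$ is non-scalar with two distinct eigenvalues $\rho,\sigma$, and the $(3,3)$ entry is the repeated eigenvalue $\rho$. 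The symmetric analysis for $C$ (with $B$ in place of $A$) yields
\[
C=\sma{ccc}a&0&b\\0&\alpha&0\\d&0&e\fma,\qquad
D=\sma{ccc}f&g&0\\h&i&0\\0&0&\rho\fma
\]
where the $2\times 2$ blocks are non-scalar with eigenvalues $\{\alpha,\beta\}$ and $\{\rho,\sigma\}$ respectively.

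Finally I would impose $CD=DC$. The entries $b$ and $d$ cannot both vanish, for otherwise $C$ is diagonal and $\langle A,C\rangle$ is abelian. In the case $b\neq 0$, the commutator equations force $f=\rho$ and $h=0$; to keep $D$ from becoming diagonal (which would make $\langle B,D\rangle$ abelian) one needs $g\neq 0$, whence $d=0$, $a=\alpha$ and $e=\beta$, so $C$ and $D$ are both upper triangular. The case $d\neq 0$ is entirely symmetric with lower triangular forms. Either way $A$ (being diagonal) and $C$ lie in a common Borel subgroup of $\glt$, so $\langle A,C\rangle$ is soluble, contradicting the hypothesis. I expect the main obstacle to be the verification of Corollary \ref{jnfc}'s hypotheses at the second step, in particular ruling out $C_G(A)=C_G(B)$; the final case analysis is routine $3\times 3$ matrix algebra once the normal forms are in place.
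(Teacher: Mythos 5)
Your proof is correct and follows essentially the same route as the paper: verify the hypotheses of Corollary \ref{jnfc} for the pair $(A,B)$, deduce the block shapes of $C$ and $D$ from their commuting with the diagonal $A$ and $B$, and then use $CD=DC$ to force $C$ or $D$ into triangular form alongside the diagonal matrices, contradicting the non-solubility of $\langle A,C\rangle$ or $\langle B,D\rangle$. The extra eigenvalue analysis of $C$ and $D$ via Proposition \ref{jnf} is correct but not needed for the contradiction.
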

\begin{proof}
We see that $A$ does not commute with $C$ and $B$ does not with $D$, but
if the centralisers
$C_G(A)$ and $C_G(B)$ were equal then $\langle A,B,C,D\rangle$
would be abelian. Thus on taking the algebraic closure 
$\overline{\F}$ of $\F$ and using
Corollary \ref{jnfc}, we get that 
$A$ and $B$ are in the form above. Hence we must have
\[C=\sma{ccc}\alpha&0&\beta\\0&\eta&0\\
\gamma&0&\delta\fma \mbox{ and }
D=\sma{ccc}a&b&0\\c&d&0\\0&0&e\fma.\]
But on equating $CD=DC$ we find that $c\beta=0$. Now if
$c=0$ then $D$ is upper triangular which makes $\langle B,D\rangle$
soluble, whereas if $\beta=0$ the same is true of $\langle A,C\rangle$.
Thus $\langle A,B,C,D\rangle$ does not embed in 
$GL(3,\overline{\F})$ and so nor does $G$.
\end{proof} 

This immediately 
gives the following Corollary (possibly well known but we have never
seen it written down).
\begin{co} \label{nosq}
The direct product $F_2\times F_2$ (or any group containing $F_2\times F_2)$
cannot embed in $\glt$ for $\F$ any field.
\end{co}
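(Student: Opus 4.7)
The plan is to apply Theorem \ref{5cm} directly, with a four-cycle of commuting elements drawn from the two $F_2$ factors. Write $F_2\times F_2=\langle x,y\rangle\times\langle u,v\rangle$ and set
\[A=(x,1),\quad B=(1,u),\quad C=(y,1),\quad D=(1,v).\]
Elements in different factors commute, so the commutation relations $AB=BA$, $BC=CB$, $CD=DC$ and $DA=AD$ all hold automatically, giving the required 4-cycle of commutations.

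Next I would verify the non-solubility hypothesis. The subgroup $\langle A,C\rangle$ projects isomorphically onto $\langle x,y\rangle\cong F_2$ in the first factor (with trivial second coordinate), and similarly $\langle B,D\rangle\cong F_2$ sits inside the second factor; rank $2$ free groups are visibly non-soluble. Theorem \ref{5cm} then applies and shows that $F_2\times F_2$ admits no embedding into $GL(3,\F)$ for any field $\F$.

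For the general statement, if a group $G$ contains a copy of $F_2\times F_2$, then the four elements $A,B,C,D$ above exist inside $G$ itself and still satisfy the hypotheses of Theorem \ref{5cm} (the commutation relations are preserved by inclusion, and the subgroups $\langle A,C\rangle$ and $\langle B,D\rangle$ are unchanged, hence still non-soluble). So the same conclusion applies to $G$.

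There is really no obstacle here: once Theorem \ref{5cm} is in hand, the corollary is essentially a matter of producing a suitable commuting square, and the two free factors supply one in the most transparent way possible. The only thing worth flagging is that we must use the \emph{non-solubility} of $F_2$ rather than just non-commutativity; this is standard, as any soluble subgroup of $F_2$ is cyclic.
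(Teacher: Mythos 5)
Your proof is correct and is exactly the intended argument: the paper derives the corollary immediately from Theorem \ref{5cm} by taking the commuting square formed by the generators of the two $F_2$ factors, just as you do. Nothing further is needed.
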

Of course faithful representations of $F_2\times F_2$ abound in $GL(4,\F)$
for suitable fields $\F$ of any characteristic
by taking the direct product of two 2-dimensional representations of $F_2$,
so that we have $m(F_2\times F_2)=4=m_p(F_2\times F_2)$ for any
characteristic $p$.

\subsection{Establishing linearity}
We finish this section by quoting some results that do establish
linearity. It was shown in \cite{nis} by Nisnevi\u{c}
that the free product of
linear groups is linear. More precisely, 
if $A$ and $B$ are subgroups of
$GL(d,\F)$ for $\F$ a field of characteristic $p$ (zero or prime)
then $A*B$ embeds in $GL(d+1,\F')$ where $\F'$ is some field having the
same characteristic as $\F$. Moreover if neither
$A$ nor $B$ contain any scalar matrices
except the identity then $A*B$ embeds in $GL(d,\F')$. Further
results are in \cite{weh} and \cite{sha}, with the latter giving
a useful general result to establish the linearity of free
products with an abelian amalgamated subgroup, which we state here as
\begin{prop} (\cite{sha} Proposition 1.3) \label{shl}
Suppose $G_1*_HG_2$ is a free product with abelian amalgamated
subgroup $H$ and suppose we have faithful representations
$\rho_i:G_i\injects GL(d,\F)$ for $d\geq 2$ and
$i=1,2$ over any field $\F$ such that\\
(a) $\rho_1$ and $\rho_2$ agree on $H$,\\
(b) $\rho_1(h)$ is diagonal for all $h\in H$ and\\
(c) For all $g\in G_1\setminus H$ we have that the bottom left hand entry
of $\rho_1(g)$ is non zero, and similarly for the top right hand entry
of $\rho_2(g)$ for all $g\in G_2\setminus H$.

Then $G_1*_HG_2$ embeds in $GL(d,\F(t))$ where $t$ is a transcendental
element over $\F$. In particular, if $\F=\C$ or our universal domain
of positive
characteristic $p$ then $G_1*_HG_2$ also embeds in
$GL(n,\F)$ as here $\F(t)$ embeds into $\F$.
\end{prop}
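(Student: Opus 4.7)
The plan is to twist $\rho_1$ and $\rho_2$ by conjugation with a diagonal matrix whose entries are powers of $t$, so that they still coincide on $H$ but, after twisting, alternating products have a predictable top-degree term in $t$. Set $D = \mathrm{diag}(t^{d-1}, t^{d-2}, \ldots, t, 1) \in GL(d, \F(t))$ and define $\tilde{\rho}_1(g) = D^{-1}\rho_1(g)D$ and $\sigma_2(g) = D\rho_2(g)D^{-1}$; these are faithful representations $G_i \to GL(d, \F(t))$. Conditions (a) and (b) force $\rho_1(h) = \rho_2(h)$ to be diagonal for $h \in H$, hence to commute with $D$, so $\tilde{\rho}_1$ and $\sigma_2$ agree on $H$ and the universal property of $G_1 *_H G_2$ yields a homomorphism $\rho: G_1 *_H G_2 \to GL(d, \F(t))$ extending both.

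A direct computation gives $(\tilde{\rho}_1(g))_{ij} = t^{i-j}\rho_1(g)_{ij}$ and $(\sigma_2(g))_{ij} = t^{j-i}\rho_2(g)_{ij}$, so by (c) the entry $(\tilde{\rho}_1(g))_{d1} = t^{d-1}\rho_1(g)_{d1}$ is non-zero and has $t$-degree $d-1$ strictly larger than the $t$-degree of any other entry of $\tilde{\rho}_1(g)$ for $g \in G_1 \setminus H$; the analogous statement holds for $(\sigma_2(g))_{1d}$ and $g \in G_2 \setminus H$.

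To prove $\rho$ faithful, take $w \neq 1$ in reduced normal form $w = h g_1 g_2 \cdots g_n$ with $h \in H$ and the $g_i$ alternating between $G_1 \setminus H$ and $G_2 \setminus H$. The cases $n \leq 1$ reduce to faithfulness of the $\rho_i$. For $n \geq 2$, I would look at the $(s, r)$-entry of $\rho(w)$ where $s \in \{1, d\}$ is selected according to whether $g_1 \in G_2$ or $g_1 \in G_1$, and $r \in \{1,d\}$ is selected analogously from $g_n$. Expanding this entry as a sum over intermediate index sequences $s = m_0, m_1, \ldots, m_n = r$, each term carries a $t$-exponent $\sum_{i=1}^n \varepsilon_i(m_{i-1} - m_i)$, where $\varepsilon_i = +1$ if $g_i \in G_1$ and $\varepsilon_i = -1$ if $g_i \in G_2$. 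A short case analysis shows this expression is maximised uniquely when each $m_i$ alternates between $1$ and $d$, yielding the value $n(d-1)$; the leading coefficient is the product of the distinguished entries $\rho_1(g_i)_{d1}$ (for $g_i \in G_1$) and $\rho_2(g_i)_{1d}$ (for $g_i \in G_2$), scaled by a non-zero diagonal factor from $\rho_1(h)$, and so is non-zero. Hence this entry of $\rho(w)$ is a non-trivial rational function of $t$ and therefore cannot equal the corresponding entry of $I$, giving $\rho(w) \neq I$.

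The main bookkeeping obstacle is verifying that the extremal index sequence is unique, so that no lower order terms can cancel the leading coefficient; this follows because for each interior index $m_i$ the relevant piece of the exponent is linear in $m_i$ with non-zero coefficient, forcing $m_i$ to an endpoint of $\{1, \ldots, d\}$, and the alternating pattern of the $\varepsilon_i$ makes these forced choices mutually consistent. Finally, when $\F$ is $\C$ or our universal domain of characteristic $p$, it is algebraically closed of infinite transcendence degree, so any element of $\F$ transcendental over the subfield generated by the entries appearing in $\rho_1(G_1)$ and $\rho_2(G_2)$ yields an embedding $\F(t) \injects \F$ and hence $GL(d, \F(t)) \injects GL(d, \F)$.
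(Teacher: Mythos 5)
Your proof is correct and is essentially the same argument as the paper's (Shalen's): twist by a diagonal matrix in powers of $t$ so that in any alternating normal form a distinguished corner entry acquires a unique top-degree term whose coefficient is a product of the non-zero corner entries guaranteed by (c). The only cosmetic differences are that you conjugate both factors symmetrically by $D^{\mp 1}$ rather than twisting $\rho_1$ alone, and you treat arbitrary alternating words directly instead of first conjugating to an even-length form and inducting, which changes nothing essential.
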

\begin{proof} In \cite{sha} the result was stated just for $\C$ (and for
$SL(d,\C)$ rather than $GL(d,\C)$) but the proof goes through in general
so here we just give a summary.

Define the representation $\rho:G_1*_HG_2\rightarrow GL(d,\F(t))$ as
equal to $\rho_2$ on $G_2$ but on $G_1$ we replace $\rho_1$ by the
conjugate representation $T\rho_1T^{-1}$ where $T$ is the diagonal
matrix $\mbox{diag}(t,t^2,\ldots ,t^n)$, and then extend to all of
$G_1*_HG_2$. Now it can be shown straightforwardly 
that any element not conjugate into $G_1\cup G_2$ is
conjugate in $G_1*_HG_2$ to something with normal form
\[g=\gamma_1\delta_1\ldots \gamma_l\delta_l\]
where all $\gamma_i\in G_1\setminus H$ and all $\delta_i\in G_2\setminus H$.
Induction on $l$ then yields that the entries of $g$ are Laurent polynomials
in $t^{\pm 1}$ with coefficients in $\F$ and with the bottom right 
hand entry of $g$ equal to $\alpha t^{l(d-1)}+\ldots$ 
where all other terms are of
strictly lower degree in $t$. But it can be checked
that $\alpha$ is actually just a product of these respective
bottom left and top right entries, thus is a non zero element of $\F$
so this bottom right hand entry does not equal 1 and
$g$ is not the identity matrix.
\end{proof}   

We end with a mention of direct products: if $G=A\times B$ then we can take
a direct sum of faithful representations for $A$ and $B$ to get
$m_p(G)\leq m_p(A)+m_p(B)$ for any $p$. Equality can be
obtained even for torsion free groups, such as $A=B=F_2$ but an easy lemma
allows us to do better in one case.
\begin{lem} \label{dp} For any characteristic $p$, any $n\in\N$ and
any group $G$ we have $m_p(G)=m_p(G\times \Z^n)$.
\end{lem}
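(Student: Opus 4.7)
The inequality $m_p(G)\leq m_p(G\times\Z^n)$ is immediate, since $G$ embeds as a direct factor of $G\times\Z^n$, so restricting any faithful representation of the product to $G$ yields a faithful one for $G$. The content is therefore the reverse inequality, and the natural strategy is to absorb the extra $\Z^n$ factor into a faithful representation of $G$ by multiplying by scalars: scalar matrices are central, so the commutativity of $\Z^n$ with $G$ inside the direct product is matched automatically.

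Concretely, starting from a faithful $\rho:G\hookrightarrow GL(d,\F)$ with $d=m_p(G)$ and $\F$ of characteristic $p$, the plan is to adjoin $n$ elements $t_1,\ldots,t_n$ that are algebraically independent transcendentals over $\F$ and to work over the extension $\F(t_1,\ldots,t_n)$, which is still a field of characteristic $p$. I would then define
\[\rho':G\times\Z^n\to GL(d,\F(t_1,\ldots,t_n)),\quad\rho'(g,(k_1,\ldots,k_n))=t_1^{k_1}\cdots t_n^{k_n}\rho(g).\]
This is a homomorphism because the scalar factor is central and multiplies additively in the exponents, matching the $\Z^n$ structure.

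For faithfulness, suppose $\rho'(g,k)=I$. Then $\rho(g)=t_1^{-k_1}\cdots t_n^{-k_n}I$, so the scalar $c=t_1^{-k_1}\cdots t_n^{-k_n}$ must lie in $\F$ because $\rho(g)$ has its entries in $\F$. The crucial point is then that a Laurent monomial in the $t_i$'s can lie in $\F$ only when every exponent vanishes, which follows from algebraic independence of the $t_i$'s over $\F$ (the Laurent monomials form an $\F$-basis of $\F[t_1^{\pm 1},\ldots,t_n^{\pm 1}]$). Hence $k=0$ and $c=1$, so $\rho(g)=I$ and $g=e$ by faithfulness of $\rho$. There is no genuine obstacle in this argument; the only thing to verify is this multiplicative independence step, which is a standard consequence of transcendence.
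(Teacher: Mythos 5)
Your proof is correct and takes essentially the same approach as the paper: there too the $\Z^n$ factor is represented by scalar matrices $tI_d$ with $t$ transcendental over the coefficient field of the image of $G$, the only difference being that the paper adjoins one transcendental at a time by induction on $n$ rather than all $n$ at once. The multiplicative-independence step you verify is exactly the point the paper invokes when it notes that no nontrivial power of $tI_d$ equals an element of the image of $G$.
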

\begin{proof} We proceed by induction on $n$, so assume $n=1$. Given our
faithful representation in dimension $d$ of $G$ over the field $\F$, which
we can assume is the coefficient field of $G$, then on taking a number $t$
which is transcendental over $\F$ we add the matrix $tI_d$ to $G$.
This matrix has infinite order and no positive power of it is equal
to an element of $G$. We now replace $\F$ by $\F(t)$ and repeat $n$ times.
\end{proof}  
No such straightforward result can exist for semidirect products
$G\rtimes\Z^n$, even if $n=1$, as will be seen in Sections 5, 6 and 7.

\section{Right angled Artin groups}

Right angled Artin groups (RAAGs) have been much studied, especially
in recent years. We assume here that the underlying graph $\Gamma$ is
finite with $n$ vertices, in which case this RAAG
denoted by $R(\Gamma)$ embeds in a right angled Coxeter group
with $2n$ vertices by \cite{hsuw} and so is a subgroup of $GL(2n,\C)$,
using the standard faithful linear representation of Coxeter groups,
or even $GL(2n,\Z)$.
Thus we have that $m_0(R(\Gamma))\leq 2n$ 
and we are interested in obtaining better bounds or even the
exact value of $m$ or $m_0$ for various examples. That this is a non trivial
question is suggested by the analogous question on when a RAAG
$R(\Gamma)$ embeds in another RAAG $R(\Delta)$, with 
$m_0(R(\Gamma))\leq
m_0(R(\Delta))$ being an obvious necessary condition. Amongst the many
papers on this intriguing problem, we mention \cite{kim}, \cite{kam},
\cite{kmkb} and \cite{crdk}.

We start with the smallest dimensional representations and will consider 
both zero and positive characteristic, as here our results work equally well 
in all cases.
Obviously $m(R(\Gamma))=1$ (equivalently $m_p(R(\Gamma))=1$ 
for any characteristic $p$)
if and only if $R(\Gamma)$ is abelian if and
only if $\Gamma$ is a complete graph. We now consider the two dimensional
case.
\begin{prop} \label{2d} We have $m(R(\Gamma))=2$ 
(equivalently $m_p(R(\Gamma))=2$ for any characteristic $p$)
if and only if, on removal of all
vertices joining every other vertex, the resulting graph has at least
2 connected components, all of which are complete subgraphs.
\end{prop}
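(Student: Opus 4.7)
The plan is to reduce to the case where $\Gamma$ has no vertex joined to all others, and then handle the two implications separately. Let $Z \subseteq V(\Gamma)$ be the set of universal vertices; these are pairwise adjacent (each being adjacent to everything), so $\Gamma$ is the join of $\Gamma' := \Gamma \setminus Z$ with the complete graph on $Z$, giving the direct product decomposition $R(\Gamma) \cong R(\Gamma') \times \Z^{|Z|}$. A brief check shows $\Gamma'$ itself has no universal vertex (any such would already have been universal in $\Gamma$). Applying Lemma \ref{dp} gives $m_p(R(\Gamma)) = m_p(R(\Gamma'))$ for every characteristic $p$, so the claim reduces to showing that $m_p(R(\Gamma')) = 2$ is equivalent to $\Gamma'$ having at least two connected components, each a complete subgraph.

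For the sufficiency direction, under this hypothesis $\Gamma'$ breaks as a disjoint union of complete graphs $C_1 \sqcup \cdots \sqcup C_m$ with $m \geq 2$, so $R(\Gamma') \cong \Z^{|C_1|} * \cdots * \Z^{|C_m|}$, a nontrivial free product which is in particular nonabelian and forces $m_p \geq 2$. To get $m_p \leq 2$ in any chosen characteristic $p$, embed each $\Z^{|C_i|}$ in $GL(2,\F)$ as a diagonal subgroup generated by matrices $\mathrm{diag}(t_{i,j}, 1)$ with $\{t_{i,j}\}$ algebraically independent transcendentals over the prime field; none of these subgroups contains a scalar matrix other than $I_2$. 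Iterating Nisnevi\v{c}'s theorem, as quoted in the excerpt, then realises the free product inside $GL(2, \F')$ for some field $\F'$ of characteristic $p$.

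For the necessity direction, suppose $\rho : R(\Gamma') \hookrightarrow GL(2, \F)$ is faithful. The crucial preliminary observation is that no vertex $v$ can be sent to a scalar matrix: since $\Gamma'$ has no universal vertex, some $w$ is not adjacent to $v$, so $[v, w] \neq 1$ in the RAAG, yet a scalar $\rho(v)$ would commute with $\rho(w)$ and force $\rho([v, w]) = I_2$. With this in hand, apply the commutative-transitivity remark recalled in the excerpt to any three vertices $u, v, w$ with $u \sim v$ and $v \sim w$ in $\Gamma'$: since $\rho(v)$ is non-scalar and commutes with both $\rho(u)$ and $\rho(w)$, we obtain $\rho(u)\rho(w) = \rho(w)\rho(u)$, and faithfulness together with the standard RAAG fact that two generators commute iff they are joined by an edge gives $u \sim w$ in $\Gamma'$. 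A straightforward induction on the length of a shortest path shows that every connected component of $\Gamma'$ is complete, and if $\Gamma'$ had only one component it would itself be a complete graph, making $R(\Gamma')$ free abelian and forcing $m_p \leq 1$, contrary to assumption. The main obstacle is precisely this step: ruling out scalar images is what makes the reduction to a universal-vertex-free graph indispensable, because without it the commutative-transitivity argument collapses.
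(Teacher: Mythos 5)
Your proof is correct and follows essentially the same route as the paper: Lemma \ref{dp} to strip the universal vertices, Nisnevi\v{c}'s free product theorem with the same scalar-avoiding trick (a faithful diagonal $1$-dimensional embedding summed with the trivial representation) for sufficiency, and the $GL(2,\F)$ commutative-transitivity observation for necessity. The only cosmetic difference is the order of operations in the converse (you remove universal vertices first and then rule out scalar images, whereas the paper uses the representation to show any vertex adjacent to two non-adjacent vertices must be scalar, hence universal, before removing them), which does not change the substance of the argument.
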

\begin{proof}
First if $\Gamma$ consists of two or more connected components which
are all complete subgraphs then $R(\Gamma)$ is a free product of free
abelian groups, so we can use the result \cite{nis}
of Nisnevi\u{c} mentioned in the
last section to get that $m(R(\Gamma))=m_p(R(\Gamma))=2$ for any
characteristic $p$ provided we avoid scalar matrices, which can be
done by starting with a 1-dimensional faithful representation
of a factor $\Z^n$ and extending this to a 2-dimensional representation by 
putting a single 1 in the bottom right hand corner below 
(ie take the direct sum with the trivial representation).
If there are
now also $k$ vertices joined to every other vertex (including each
other) then these elements will be in the centre and will provide
a direct factor of $\Z^k$, so we can apply Lemma \ref{dp}.

For the converse, we mentioned in the last section that a group which embeds 
in $GL(2,\F)$ must be very close to being commutative transitive. In
particular if $\Gamma$ is not complete then suppose first that
we have vertices
$v_1,v_2,v\in\Gamma$ with $v_1$ not joined to $v_2$ but $v$ joined to them
both. Then if $R(\Gamma)\leq GL(2,\F)$ we see that the element in
$R(\Gamma)$ corresponding to $v$ is in the centre and thus joined to every
other vertex. On removal of all such vertices (and all incident edges), which
will not affect the value of $m(R(\Gamma))$ or $m_p(R(\Gamma))$ by
Lemma \ref{dp}, we
will be left with a graph (or the empty set if $\Gamma$ was complete) with
no such $v_1,v_2,v$. This means that the relation of two vertices being
joined is an equivalence relation (at least if we regard a vertex as
related to itself) and so the connected components of $\Gamma$ must
be equivalence classes and so are complete subgraphs.
\end{proof}

However once we move above dimension 2 things become less obvious. 
At least we have immediately from Corollary \ref{nosq}:
\begin{thm} \label{rgsq}
If $m(R(\Gamma))\leq 3$ then $\Gamma$ does not have any
induced squares.
\end{thm}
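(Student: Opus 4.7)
The plan is to prove the contrapositive: if $\Gamma$ contains an induced square then $R(\Gamma)$ contains $F_2\times F_2$, and hence by Corollary \ref{nosq} cannot embed in $GL(3,\F)$ for any field $\F$, giving $m(R(\Gamma))\geq 4$.

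First I would identify the key structural fact about RAAGs: if $\Delta$ is an induced subgraph of $\Gamma$ (so $\Delta$ contains every edge of $\Gamma$ both of whose endpoints lie in the vertex set of $\Delta$), then the obvious map $R(\Delta)\to R(\Gamma)$ sending generators to generators is injective. This is a standard and well known fact about right angled Artin groups, usually proved via a retraction $R(\Gamma)\to R(\Delta)$ that kills the vertices outside $\Delta$; one checks that the defining relators of $R(\Gamma)$ either lie in the relators of $R(\Delta)$ or have a generator outside $\Delta$ on each side, so the retraction is well defined and is a left inverse to the inclusion.

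Next I would examine the RAAG on the induced square itself. Labelling the vertices of an induced 4-cycle cyclically as $v_1,v_2,v_3,v_4$, the defining relations are exactly $[v_1,v_2]=[v_2,v_3]=[v_3,v_4]=[v_4,v_1]=1$, with $v_1\not\leftrightarrow v_3$ and $v_2\not\leftrightarrow v_4$. Since each of $v_1,v_3$ commutes with each of $v_2,v_4$, the subgroups $\langle v_1,v_3\rangle$ and $\langle v_2,v_4\rangle$ commute elementwise, and neither contains a nontrivial relation beyond this; a standard normal form argument (or again the retraction trick, retracting onto $\{v_1,v_3\}$ and onto $\{v_2,v_4\}$) shows each is free of rank $2$. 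Hence $R(C_4)\cong F_2\times F_2$.

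Combining these steps, an induced square in $\Gamma$ produces an embedding $F_2\times F_2\cong R(C_4)\hookrightarrow R(\Gamma)$, whereupon Corollary \ref{nosq} immediately gives $m(R(\Gamma))\geq 4$. There is no real obstacle here; the only point requiring any care is the appeal to the induced-subgraph subgroup property, which could either be cited as standard or verified on the spot via the explicit retraction. Note that this argument is characteristic-independent, so the same conclusion holds with $m_p$ in place of $m$ for every $p$.
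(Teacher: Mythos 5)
Your proposal is correct and follows essentially the same route as the paper: the subgroup generated by the vertices of an induced square is the RAAG on that square, namely $F_2\times F_2$, and Corollary \ref{nosq} then rules out a faithful representation in $GL(3,\F)$. The only difference is that you spell out the standard induced-subgraph/retraction argument that the paper simply cites as known.
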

\begin{proof}
On taking the subgroup of $R(\Gamma)$ generated by
the vertex elements of the induced square in such a faithful representation,
it is known for RAAGs that this will generate the RAAG given by a square
which is $F_2\times F_2$.
\end{proof}

We can now look through (or make) a list of small graphs and use the above
results and techniques to find the exact value of $m(R(\Gamma))\in\{1,2,3,4\}$
for many $\Gamma$. Starting with $\Gamma$ having at most 4 vertices (but
not complete), we can first use Lemma \ref{dp} to remove all vertices
joined to every other vertex and then see if we have a disjoint union
of complete subgraphs, thus providing those graphs $\Gamma$ 
with $m(R(\Gamma))=2$ by Proposition \ref{2d}. 
This works for all but the square, where $m(R(\Gamma))=4$ as discussed
earlier, and the two other RAAGs 
$\,\bullet{\bf -}\bullet{\bf -}{\bullet}\quad\bullet$
and $\,\bullet{\bf -}\bullet{\bf -}{\bullet}{\bf -}\bullet$.
The first is easily dealt with using the same techniques because a free 
product $G$ of those groups in Proposition
\ref{2d} will (if not itself such a group) have $m(G)=3=m_p(G)$
for any $p$. This follows as we can similarly
extend the 2-dimensional faithful representations
of the factors to 3-dimensional representations by putting a single 1
in the bottom right hand corner. We now have no scalar matrices
except the identity in any factor
so the free product result \cite{nis} applies.

This leaves 
$\,\stackrel{G}{\bullet}
{\bf -}\stackrel{Q}{\bullet}{\bf -}\stackrel{P}{\bullet}{\bf -}
\stackrel{F}{\bullet}$ which 
we now look in detail as it provides a useful
application of  Shalen's Proposition \ref{shl}, where we have given a label
to each generator of the RAAG.   First note by Corollary \ref{jnfc}, where
now $A=Q$ and $B=P$, that if a faithful representation in $\glt$ exists then
we can assume that $Q=\mbox{diag}(\lambda,\lambda,\mu)$ and
$P=\mbox{diag}(x,y,x)$. Consequently $F$ and $G$ must have the same form
as $C$ and $D$ respectively in the proof of Theorem \ref{5cm} before we
equate $CD=DC$. This gives rise to expressions for our matrices
$G,Q,P,F$ immediately prior to the application of Proposition \ref{shl}, 
which we denote by $G_0,Q_0,P_0,F_0$ respectively. In order to avoid surplus
variables which will turn out to be unnecessary,
we simplify $G_0,Q_0,P_0,F_0$ by setting $e=\eta=1$ and 
$x=\lambda,\mu=y=\lambda^{-1}$ which gives us:
\begin{thm} \label{main}
For the RAAG $\mathcal R$ defined by the graph
$\,\stackrel{G}{\bullet}
{\bf -}\stackrel{Q}{\bullet}{\bf -}\stackrel{P}{\bullet}{\bf -}
\stackrel{F}{\bullet}$,
consider the following matrices, either over the field $\F=\C$ with
prime subfield $\PP=\Q$ or over $\F$ 
a universal coefficient domain of characteristic $p$ with
prime subfield $\PP=\F_p$.
\[G_0=\sma{ccc}a&b&0\\c&d&0\\0&0&1\fma,
Q_0=\sma{ccc}\lambda&0&0\\0&\lambda&0\\0&0&\lambda^{-1}\fma,
P_0=\sma{ccc}\lambda&0&0\\0&\lambda^{-1}&0\\0&0&\lambda\fma,
F_0=\sma{ccc}\alpha&0&\beta\\0&1&0\\
\gamma&0&\delta\fma
\]
where $ad-bc,\alpha\delta-\beta\gamma,\lambda\neq 0$ so that 
$G_0,Q_0,P_0,F_0\in\glt$. 
Suppose the following conditions are satisfied:\\
(1) The matrix $\sma{cc}\alpha&\beta\\
\gamma&\delta\fma$ is such that no non trivial power has an off diagonal
entry which is zero.\\
(2) The matrices
$\sma{cc}a&b\\c&d\fma,
\sma{cc}\lambda&0\\0&\lambda^{-1}\fma\in GL(2,\F)$ generate a non abelian free
group such that any matrix in this group with top left hand entry equal
to 1 is the identity $I_2$.\\
Then on first conjugating these four matrices by $F_0$ (so that $G_0$ and
$Q_0$ change but $P_0$ and $F_0$ stay the same), then conjugating
just $P_0$ and $F_0$ by $D=\mbox{diag}(\phi,\phi^2,\phi^3)$, where
$\phi$ is a transcendental element over the field
$\PP(a,b,c,d,\alpha,\beta,\gamma,\delta,\lambda)$ (but now leaving
$G_0$ and $Q_0$ alone so that $P_0$ is still unchanged), we have that
the resulting matrices $G=F_0G_0F_0^{-1}$, $Q=F_0Q_0F_0^{-1}$, 
$P=DP_0D^{-1}=P_0$,
$F=DF_0D^{-1}$ provide a faithful representation of this RAAG 
$\,\stackrel{G}{\bullet}
{\bf -}\stackrel{Q}{\bullet}{\bf -}\stackrel{P}{\bullet}{\bf -}
\stackrel{F}{\bullet}$
in $\glt$.
\end{thm}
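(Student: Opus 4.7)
The plan is to realize $\mathcal R$ as an amalgamated free product and invoke Shalen's Proposition \ref{shl}. Since removing the vertex $P$ from the defining path disconnects it into $\{G,Q\}$ and $\{F\}$, we have $\mathcal R = G_1 *_H G_2$, where $G_1 = \langle G,Q,P\rangle \cong F_2 \times \Z$ (with $\langle G,P\rangle$ free of rank two and $Q$ central), $G_2 = \langle P,F\rangle \cong \Z^2$, and $H = \langle P\rangle$.

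Next I would define candidate representations $\rho_1^0 : G_1 \to \glt$ by $G \mapsto G_0$, $Q \mapsto Q_0$, $P \mapsto P_0$, and $\rho_2^0 : G_2 \to \glt$ by $P \mapsto P_0$, $F \mapsto F_0$. All required commutations follow from block structure: $Q_0$ acts as the scalar $\lambda$ on $\langle e_1,e_2\rangle$ and so commutes with $G_0$; $Q_0$ and $P_0$ are simultaneously diagonal; and $F_0$ preserves the $P_0$-eigenspace decomposition $\langle e_1,e_3\rangle \oplus \langle e_2\rangle$. Both $\rho_i^0$ agree on $H$ and map $P$ to the diagonal $P_0$. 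Faithfulness of $\rho_1^0$ I would establish by writing $g = w(G,P)Q^k$ and noting $\rho_1^0(g)$ is block diagonal with top $2 \times 2$ block $W\lambda^k$ (where $W = w(B,P_{\mathrm{top}})$, $B = \left(\begin{smallmatrix} a & b \\ c & d \end{smallmatrix}\right)$, $P_{\mathrm{top}} = \mathrm{diag}(\lambda,\lambda^{-1})$) and bottom entry $\lambda^{m-k}$ ($m$ the $P$-exponent sum): if $\rho_1^0(g) = I$ then $W = \lambda^{-k} I_2$, so $W \cdot P_{\mathrm{top}}^k$ has top-left entry $1$, and condition (2) forces it to equal $I_2$, giving $k = 0$ and then $w = 1$. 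Faithfulness of $\rho_2^0 : \Z^2 \to \glt$ is immediate since condition (1) forces $A = \left(\begin{smallmatrix}\alpha&\beta\\\gamma&\delta\end{smallmatrix}\right)$ (and so $F_0$) to have infinite order.

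The main technical point is verifying Shalen's corner hypothesis (c), for which I would first replace $\rho_1^0$ by the conjugate $\rho_1 = F_0 \rho_1^0 F_0^{-1}$. Because $F_0$ commutes with $P_0$, the image $\rho_1(P) = P_0$ is unchanged, so $\rho_1$ still agrees with $\rho_2^0$ on $H$ with diagonal image, preserving hypotheses (a) and (b). A direct computation of the $(3,1)$-entry of $\rho_1(g) = F_0 \cdot \mathrm{diag}(W\lambda^k, \lambda^{m-k}) \cdot F_0^{-1}$ for $g = w(G,P) Q^k \in G_1 \setminus H$ works out to $(\gamma\delta/\Delta)(W_{11}\lambda^k - \lambda^{m-k})$, where $\Delta = \alpha\delta - \beta\gamma$. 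If this vanished, then $W_{11} = \lambda^{m-2k}$; applying condition (2) to $W \cdot P_{\mathrm{top}}^{-(m-2k)}$ (whose $(1,1)$-entry is then $1$) would force $W = P_{\mathrm{top}}^{m-2k}$ in the free group $\langle B,P_{\mathrm{top}}\rangle$, making $w$ a pure power of $P$ and hence $g \in H$, a contradiction. On the other side, for $g = P^n F^k \in G_2 \setminus H$ (so $k \ne 0$) the $(1,3)$-entry of $\rho_2^0(g) = P_0^n F_0^k$ equals $\lambda^n A^k_{12}$, non-zero by condition (1).

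With every hypothesis of Proposition \ref{shl} verified, the amalgamated representation obtained by conjugating $\rho_1$ by $T = \mathrm{diag}(t, t^2, t^3)$, for $t$ transcendental over $\PP(a,b,c,d,\alpha,\beta,\gamma,\delta,\lambda)$, is faithful. A global reconjugation by $T^{-1}$ restores $\rho_1$ to itself while sending $\rho_2^0(F) = F_0$ to $T^{-1} F_0 T$; setting $\phi = t^{-1}$ so that $T^{-1}$ equals $D = \mathrm{diag}(\phi, \phi^2, \phi^3)$ recovers exactly the stated matrices $G = F_0 G_0 F_0^{-1}$, $Q = F_0 Q_0 F_0^{-1}$, $P = P_0$, $F = D F_0 D^{-1}$.
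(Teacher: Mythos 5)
Your overall route is the paper's own: the same decomposition of $\mathcal R$ as $(F_2\times\Z)*_{\langle P\rangle}(\Z\times\Z)$, the same appeal to Proposition \ref{shl}, the same device of conjugating the $F_2\times\Z$ factor by $F_0$ so that the relevant corner entry of a conjugated element becomes a non-zero multiple of $u-v$, where $u,v$ are its first and third diagonal entries, and the same transcendental diagonal conjugation at the end; the only cosmetic difference is that you conjugate the $F_2\times\Z$ side by $T$ and then undo this globally, whereas the paper conjugates $\langle P_0,F_0\rangle$ by $D$ directly, and both land on the same four matrices.

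However one step does not follow as written. In verifying Shalen's corner condition for $g=w(G,P)Q^k\in G_1\setminus H$, you deduce from the vanishing of the $(3,1)$ entry that $W=P_{\mathrm{top}}^{m-2k}$, hence that $w$ is a pure power of $P$, ``and hence $g\in H$''. But then $g=P^{m-2k}Q^k$, which lies in $H=\langle P\rangle$ only when $k=0$; as written you have not excluded elements of the form $P^iQ^k$ with $k\neq 0$ from having a vanishing corner, and this is exactly the case the paper takes care to treat. The hole is closed in one line: since $w(x,y)=y^{m-2k}$ in the free group and the $y$-exponent sum of $w$ is $m$, we get $m=m-2k$, so $k=0$; equivalently, as the paper argues, $g=P_0^{m-2k}Q_0^k$ has first and third diagonal entries $\lambda^{m-k}$ and $\lambda^{m-3k}$, whose assumed equality gives $\lambda^{2k}=1$ and hence $k=0$, because Condition (2) makes $\mathrm{diag}(\lambda,\lambda^{-1})$ of infinite order so $\lambda$ is not a root of unity. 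With that line added your argument is complete. (Your corner formula also presumes $\gamma\delta\neq 0$, which Condition (1) does not literally supply since it only controls off-diagonal entries; but the paper's corresponding computation makes the same implicit assumption that the diagonal entries of the $2\times 2$ block of $F_0$ are non-zero, so this is not a defect particular to your write-up.)
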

\begin{proof}
We apply Proposition \ref{shl} by regarding the RAAG 
$\,\stackrel{G}{\bullet}
{\bf -}\stackrel{Q}{\bullet}{\bf -}\stackrel{P}{\bullet}{\bf -}
\stackrel{F}{\bullet}$
as $\,\stackrel{G}{\bullet}
{\bf -}\stackrel{Q}{\bullet}{\bf -}\stackrel{P}{\bullet}
\,\cong F_2\times\Z$ amalgamated with 
$\,\stackrel{P}{\bullet}{\bf -}
\stackrel{F}{\bullet}
\,\cong\Z\times\Z$ over 
$\,\stackrel{P}{\bullet}\,\cong\Z$. 
First note that the group $\Gamma_1:=\langle P_0,F_0\rangle$
is a faithful representation of $\Z^2$ because Condition (1)
says that no non trivial power of $F_0$ is
diagonal. This also applies to $P_0^iF_0^j$ if $j\neq 0$, else we
could multiply by $P_0^{-i}$ and use the fact that pre or post multiplication
of a given matrix by any invertible diagonal matrix
does not alter which entries of the given matrix are
zero. In particular the only diagonal elements of 
$\Gamma_1$ are
$\langle P_0\rangle\cong\Z$ and indeed these are the only elements
of $\Gamma_1$ with a zero top right or bottom left hand entry. 

Now $\Gamma_2:=\langle G_0,P_0,Q_0\rangle\cong F_2\times\Z$ 
with $Q_0$ as the generating central element, by Condition
(2) and the fact that the only diagonal elements of
$\langle G_0,P_0\rangle$ are powers of $P_0$ (because any such element would
commute with $P_0$). As before this also implies that the only diagonal
elements of $\langle G_0,P_0,Q_0\rangle$ are $\langle P_0,Q_0\rangle\cong\Z^2$.
However we must now conjugate 
$\langle  G_0,Q_0,P_0\rangle$ in order to satisfy the zeros condition in
Proposition \ref{shl} and we use $F_0$ for this purpose. Although the
exact form of $F_0G_0F_0^{-1}$ is not very enlightening, it is easily
checked that conjugating a matrix of the form 
\[\sma{ccc}u&?&0\\ ?&?&0\\0&0&v\fma
\mbox{ by an invertible matrix of the form }
\sma{ccc}*&0&*\\0&*&0\\ *&0&*\fma,\]
where $u,v\in \F$ and $?$ is used to stand for arbitrary 
elements of $\F$ whereas $*$ denotes arbitrary non zero elements of $\F$,
results in
something with its top right or bottom left entry equal to zero if and
only if $u=v$.
Thus let us suppose there is an element $\gamma=w(G_0,P_0)Q_0^j\in\Gamma_2$ 
with its first and third diagonal entries equal to each other.
The latter entry must be equal to $\lambda^{i-j}$ where $i$ is the
exponent sum of $P_0$ in the word $w$. Thus $\gamma Q_0^{-j}$ is in
$\langle G_0,P_0\rangle\cong F_2$
with its first diagonal entry equal to
$\lambda^{i-2j}$ and third equalling $\lambda^i$. But now we can
consider $P_0^{2j-i}\gamma Q_0^{-j}$ with top diagonal entry equal to 1, 
which is also in $F_2$. By Condition (2) of this theorem, this element 
is the identity so $\gamma=P_0^{i-2j}Q_0^j$ has (equal) top/bottom diagonal 
entries $\lambda^{i-j}$ and $\lambda^{i-3j}$ respectively. This forces $j=0$ so
$\gamma$ was in $\langle P_0\rangle$ anyway.

Thus $\Gamma_1$ and $F_0\Gamma_2F_0^{-1}$ both
satisfy the property that the only
elements whose top right or bottom left entry is  non zero must lie in
the diagonal subgroup $\langle P_0\rangle\cong\Z$. We are now ready
to apply Proposition \ref{shl} so let us therefore conjugate $\Gamma_1$ by
$\mbox{diag}(\phi,\phi^2,\phi^3)$. 
For the proof to go through, we 
need that $\phi$ is transcendentally independent
of the coefficient field of $\langle \Gamma_1,F_0\Gamma_2F_0^{-1}\rangle
=\langle G_0,Q_0,P_0,F_0\rangle$ which is indeed how $\phi$ was chosen, thus
we conclude that $\langle F_0G_0F_0^{-1},F_0Q_0F_0^{-1},DP_0D^{-1},DF_0D^{-1}
\rangle$
generates a faithful representation in $\glt$ of the RAAG 
 $\,\stackrel{G}{\bullet}
{\bf -}\stackrel{Q}{\bullet}{\bf -}\stackrel{P}{\bullet}{\bf -}
\stackrel{F}{\bullet}$
where $G,Q,P,F$ are the natural generators of this RAAG and where we
are taking
$G=F_0G_0F_0^{-1}$, $F=F_0Q_0F_0^{-1}$, $P=DP_0D^{-1}$, $F=DF_0D^{-1}$.
\end{proof} 
\begin{co} \label{mainc}
For any characteristic there exist matrices $G_0,Q_0,P_0,F_0$
such that Conditions (1) and (2) in Theorem \ref{main} are satisfied,
thus $m({\mathcal R})=3=m_p({\mathcal R})$ for any $p$.
\end{co}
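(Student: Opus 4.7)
The upper bound $m_p(\mathcal R)\le 3$ will follow from Theorem \ref{main} once matrices verifying its two conditions are exhibited, while the lower bound $m_p(\mathcal R)\ge 3$ is immediate from Proposition \ref{2d}: no vertex of $P_4$ is adjacent to every other, so the reduced graph is $P_4$ itself, which is connected but not complete, and combined with the non-abelianness of $\mathcal R$ this forces $m_p(\mathcal R)\ge 3$ in every characteristic. The task is therefore to construct $G_0,Q_0,P_0,F_0$ satisfying Conditions (1) and (2) in any characteristic.

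For Condition (1) I would choose $\bigl(\begin{smallmatrix}\alpha&\beta\\\gamma&\delta\end{smallmatrix}\bigr) = S\,\mathrm{diag}(\mu_1,\mu_2)\,S^{-1}$ with $\mu_1/\mu_2$ of infinite multiplicative order and $S\in GL(2,\F)$ having all four entries nonzero. The $n$th power is $S\,\mathrm{diag}(\mu_1^n,\mu_2^n)\,S^{-1}$, whose off-diagonal entries are nonzero scalar multiples of $\mu_1^n - \mu_2^n \ne 0$. Taking $\mu_1,\mu_2$ and the entries of $S$ transcendental over $\PP$ inside the universal coordinate domain makes this work uniformly over all characteristics.

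For Condition (2), let $\lambda$ be transcendental over the coefficient field already generated, set $D=\mathrm{diag}(\lambda,\lambda^{-1})$, and pick $M=\bigl(\begin{smallmatrix}a&b\\c&d\end{smallmatrix}\bigr)\in GL(2,\F)$ with $bc\ne 0$ and $a,b,c,d$ sufficiently generic (e.g.\ further transcendentals algebraically independent over $\PP(\lambda)$). Freeness of $\langle M,D\rangle$ is obtained by a standard non-archimedean ping-pong argument based on the $\lambda^{-1}$-adic valuation on the coefficient field: for $|n|$ large, $D^n$ strongly contracts toward a fixed point of $D$ on $\mathbb P^1$, while $bc\ne 0$ ensures $M$ moves both of these fixed points off themselves, yielding disjoint ping-pong regions.

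The heart of the proof is the secondary requirement that every nontrivial element of $\langle M,D\rangle$ have top-left entry different from $1$. Pure powers $D^k$ give $(D^k)_{11}=\lambda^k\ne 1$ since $\lambda$ is transcendental, and pure powers $M^k$ give a non-trivial polynomial in $a,b,c,d$ which cannot equal $1$ when these are algebraically independent transcendentals. For a general reduced alternating word $w=D^{k_0}M^{\epsilon_1}D^{k_1}\cdots M^{\epsilon_r}D^{k_r}$ with $r\ge 1$, the plan is to expand using the identity $D^k M D^{-k} = \bigl(\begin{smallmatrix}a&b\lambda^{2k}\\c\lambda^{-2k}&d\end{smallmatrix}\bigr)$ and prove by induction on $r$ that $w_{11}$, viewed as a Laurent polynomial in $\lambda$ over $\PP(a,b,c,d)$, contains a nontrivial term of nonzero $\lambda$-degree; since $\lambda$ is transcendental this makes $w_{11}\ne 1$. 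The main obstacle is the degree-tracking bookkeeping, specifically verifying that no accidental cancellations can collapse $w_{11}$ to the constant $1$, and transcendental independence of $a,b,c,d$ is exactly what rules such cancellations out. Once both conditions hold, Theorem \ref{main} delivers the desired embedding into $GL(3,\F)$.
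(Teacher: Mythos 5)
Your lower bound (via Proposition \ref{2d}) and your construction for Condition (1) are fine --- diagonalising $\bigl(\begin{smallmatrix}\alpha&\beta\\ \gamma&\delta\end{smallmatrix}\bigr)$ as $S\,\mathrm{diag}(\mu_1,\mu_2)\,S^{-1}$ with $\mu_1/\mu_2$ of infinite order and $S$ having no zero entry is a legitimate alternative to the paper, which instead takes this matrix equal to $X=T^2$ for $T=\bigl(\begin{smallmatrix}t^2&t-1\\ t+1&1\end{smallmatrix}\bigr)$ and shows by induction that all entries of all nonzero powers avoid $0$ and $1$. The genuine gap is in Condition (2), which is the heart of the corollary and which you leave as a ``plan''. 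You reduce to showing that for a reduced word $w$ the entry $w_{11}$, viewed as a Laurent polynomial in $\lambda$ over $\PP(a,b,c,d)$, has a nontrivial term of nonzero $\lambda$-degree, and you then assert that ``transcendental independence of $a,b,c,d$ is exactly what rules such cancellations out''. That is not an argument: algebraic independence only guarantees that a coefficient which is already known to be a \emph{nonzero} polynomial in $a,b,c,d$ does not vanish; it does nothing to show that some coefficient of nonzero $\lambda$-degree is a nonzero polynomial in the first place, since a priori the sum over the many sign patterns contributing to a fixed $\lambda$-degree could vanish identically. The paper closes exactly this hole by a structural observation you do not make: writing $w=X^{m_1}Y^{n_1}\cdots X^{m_k}Y^{n_k}$ with all exponents nonzero, the extreme (highest and lowest) powers of $\lambda$ in each entry are attained by a \emph{unique} sign pattern, so their coefficients are single products $f_{m_1}\cdots f_{m_k}$ of entries of the $X^{m_i}$ with no summation and hence no possible cancellation; nonvanishing of these products is then supplied by the choice of $X$, and arbitrary elements are brought to this block form by pre/post-multiplication by powers of $Y$, which only shifts all $\lambda$-degrees uniformly. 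Your sketch contains neither the uniqueness-of-extreme-degree step nor a proof that all entries of all nonzero powers of your ``sufficiently generic'' $M$ are nonzero, so the crucial implication is unproved.

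Two smaller points. First, your separate ping-pong argument for freeness is both incomplete as stated (contraction of $D^n$ ``for $|n|$ large'' in a valuation extended so that $a,b,c,d$ have valuation zero again needs nonvanishing of entries of powers of $M$, and as phrased would only give freeness of $\langle M,D^N\rangle$) and unnecessary: once the top-left-entry condition is established, freeness is automatic, since a nontrivial relation would produce a non-identity element with top-left entry $1$ --- this is how the paper proceeds. Second, your normal form $D^{k_0}M^{\epsilon_1}D^{k_1}\cdots M^{\epsilon_r}D^{k_r}$ with single letters $M^{\pm1}$ and possibly vanishing intermediate $k_i$ makes the proposed induction awkward; the paper's grouping into blocks $X^{m_i}Y^{n_i}$ with all powers nonzero is precisely what makes the leading- and trailing-term bookkeeping tractable, and you would need to adopt something similar to carry out your plan.
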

\begin{proof}
There exist plenty of 2 by 2 invertible matrices 
$X$ such that if $m\in\Z\setminus
\{0\}$ then no entry of $X^m$ is a zero or a one, for instance if
$T=\sma{cc}t^2&t-1\\t+1&1\fma$ for $t$ any transcendental over the prime
subfield $\PP$ then it is easy to show by induction on $m\geq 2$
that the entries of $X^m$ are monic polynomials of degree $2m,2m-1,2m-1,2m-2$,
so $X=T^2$ will work here for positive $m$, and negative $m$ too as
$\mbox{det}(T)=1$. Let us take this to equal both
$X=\sma{cc}a&b\\c&d\fma$ and $Z=\sma{cc}\alpha&\beta\\\gamma&\delta\fma$,
whereas we will set
$Y=\sma{ll}\lambda&0\\0&\lambda^{-1}\fma$ for $\lambda$ any number which is
transcendental over $\PP(a,b,c,d)$, thus Condition (1) holds.

Let $X^m=\sma{cc} a_m&b_m\\c_m&d_m\fma$,
so that for $m\neq 0$
these entries are elements of $\PP(a,b,c,d)$ not equal to 0 or 1, 
and now consider a word $w$ in $F_2$ of the form
\[w=X^{m_1}Y^{n_1}X^{m_2}Y^{n_2}\ldots X^{m_k}Y^{n_k}\]
with all powers non zero. Then it is straightforward to show by induction
on $k\geq 2$ that the top left hand entry of $w$ is a 
Laurent polynomial in $\lambda$
of the following form:
\[\sum_{\bm{\epsilon}=(\epsilon_1,\ldots ,\epsilon_{k-1})\in\{-1,+1\}^{k-1}}
e^{(\bm{\epsilon})}_{m_1}e^{(\bm{\epsilon})}_{m_2}\ldots 
e^{(\bm{\epsilon})}_{m_k}
\lambda^{\epsilon_1n_1+\ldots +\epsilon_{k-1}n_{k-1}+n_k}\]
where each $e^{(\bm{\epsilon})}_{m_i}$ is equal to 
one of the four entries $a_{m_i},b_{m_i},c_{m_i},d_{m_i}$ of $X^{m_i}$.
This also holds for the other three entries of $w$,
except that now each $e^{(\bm{\epsilon})}_m$ will in general be a
different entry of $X^{m_i}$
and $n_k$ must be replaced by $-n_k$ for the two entries in the second
column.

This means we can pick out the leading and trailing terms of each entry
in $w$, which will be
\[
f_{m_1}f_{m_2}\ldots f_{m_k}
\lambda^{|n_1|+\ldots +|n_{k-1}|\pm n_k}+\ldots
+g_{m_1}g_{m_2}\ldots g_{m_k}
\lambda^{-(|n_1|+\ldots +|n_{k-1}|)\pm n_k}
\]
where the four different
$f_{m_i}$s (respectively $g_{m_i}$s) corresponding to a particular entry of $w$
are equal to 
the four different $e^{(\bm{\epsilon})}_{m_i}$s
when $\bm{\epsilon}$ is chosen to match (respectively oppose) the
signs of $(n_1,\ldots,n_{k-1})$, and where we take $+n_k$ for the first
column and $-n_k$ for the second. 

Now we have chosen $a_m,b_m,c_m,d_m$ and hence the four $f_m$
and $g_m$ so that 
they are never zero, thus we see that all entries of $w$ are Laurent
polynomials in $\lambda$ with more than one term and where each coefficient
is a product of non zero elements of $\PP(a,b,c,d)$. Thus $\lambda$ being
transcendental over this field means that no entry of $w$ lies in this
field and so cannot equal 0 or 1. Moreover every non identity
element of $F_2$ is either equal to $Y^n$ or will be in the form
above for some $w$ once we premultiply and/or postmultiply by a suitable
power(s) of $Y$. However
this just shifts all powers of $\lambda$ up or down by the same amount
in each entry. Consequently the same conclusion about the entries not lying
in this field still holds, unless under this pre/postmultiplication we end
up with $Y^n$ or $X^{m_1}Y^{n_1}$ but no matrix in the latter
case can contain a zero or 1 because of the condition we placed on the
powers of $X$. In particular $X$ and $Y$ generate a copy of
$F_2$ where no element has an entry equal to 1 or 0, except 
powers of $Y$. Thus Condition (2) is also satisfied.
\end{proof} 

We give a concrete example of such a faithful representation, at
least for characteristic zero. Here we can simplify matters by taking
$X=Z=\sma{cc}2&1\\1&1\fma^2$, because we can obtain the no zeros or
ones condition by induction using Fibonacci numbers. If we now
have any two transcendentally independent elements $\lambda,\phi$ of $\C$ 
then Corollary \ref{mainc} gives us the matrices $G,Q,P,F$, which can
now be returned to our original form on conjugation by $F_0^{-1}$. 
This allows us to take $G,Q,P=G_0,Q_0,P_0$ respectively, thus we have  
\[G=\sma{ccc}5&3&0\\3&2&0\\0&0&1\fma,
Q=\sma{ccc}\lambda&0&0\\0&\lambda&0\\0&0&\lambda^{-1}\fma,
P=\sma{ccc}\lambda&0&0\\0&\lambda^{-1}&0\\0&0&\lambda\fma
\]
but $F$ has become $F_0^{-1}DF_0D^{-1}F_0$ so we have
\[
F=\sma{ccc}-45 \phi^2+32 +18\phi^{-2}&0&-27\phi^2+18+12\phi^{-2}\\
0&1&0\\75\phi^2-45-27\phi^{-2}&0&45\phi^2-25-18\phi^{-2}\fma.\]

On looking through the (longer) list of graphs $\Gamma$
with 5 vertices, we can
use similar arguments to determine $m(R(\Gamma))$  or equivalently 
$m_p(R(\Gamma))$ for nearly all $\Gamma$. The ones that do not
succumb immediately using the above results are a few with a leaf edge
added to a graph of four vertices, which may well yield to a
similar proof as Theorem \ref{main} but one would prefer a general
approach rather than ad hoc arguments for each case, and a few with
an induced square whereupon we know we have $m(R(\Gamma))\geq 4$ by
Theorem \ref{rgsq} but we do not know if equality holds. 
Finally there is the $5$-cycle $\Gamma=C_5$
which has already been covered, at
least for characteristic zero where Theorem 1 of \cite{wangagtop07}
utilises the study of geodesics in symmetric spaces to
give a (discrete and) faithful representation of $R(C_5)$
into $GL(3,\C)$,
indeed into $SL(3,\F)$ for $\F=\Q(\sqrt{2},\sqrt{3},\sqrt{5})$,
thus this and Proposition \ref{2d}
tell us that $m(R(C_5))=m_0(R(C_5))=3$. We can now use the results
of \cite{kmkb} to obtain some immediate corollaries.
\begin{co} If $\Gamma$ is a finite forest, namely a graph with finitely
many connected components, each of which is a finite tree,
then $m_p(R(\Gamma))\leq 3$ for every characteristic $p$.
\end{co}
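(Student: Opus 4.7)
My plan is to assemble the result from three ingredients, all of which are either already established or cited in the excerpt. First, the RAAG on a disjoint union of graphs is the free product of the RAAGs on the components, so writing the tree components of $\Gamma$ as $T_1,\ldots,T_k$ we have $R(\Gamma)\cong R(T_1)*\cdots *R(T_k)$. Second, the result from \cite{kmkb} quoted in the introduction gives an embedding $R(T_i)\hookrightarrow R(P_4)$ for each tree component, where $P_4$ denotes the path on four vertices. Third, Corollary \ref{mainc} provides a faithful embedding $R(P_4)\hookrightarrow GL(3,\F)$ for $\F$ a universal coefficient domain of the given characteristic $p$. Composing gives a faithful representation $\rho_i:R(T_i)\hookrightarrow GL(3,\F)$ for every $i$.

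Next I would feed these into Nisnevi\v{c}'s free product theorem (as quoted at the end of Subsection 2.2) to build a faithful representation of $R(\Gamma)$ into $GL(3,\F')$ for some field $\F'$ of characteristic $p$. The dimension stays at $3$ rather than increasing to $4$ provided that none of the factors contains a non-trivial scalar matrix. I would verify this by appealing to the trivial centre of $R(P_4)$: since no vertex of $P_4$ is joined to every other, the centre of $R(P_4)$ is trivial. If some $g\in R(T_i)\subseteq R(P_4)$ were sent by the composite embedding into $GL(3,\F)$ to a scalar $\lambda I_3$, then $g$ would commute with the image of every element of $R(P_4)$, hence by faithfulness with every element of $R(P_4)$, forcing $g=1$.

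For the induction, forming $R(T_1)*R(T_2)$ via Nisnevi\v{c} produces a subgroup of $GL(3,\F')$ which is again faithfully represented and whose centre is trivial, because the free product of two non-trivial groups has trivial centre. The same scalar-avoidance argument then allows the next application of Nisnevi\v{c} to stay in dimension $3$, and iterating $k-1$ times delivers the desired embedding $R(\Gamma)\hookrightarrow GL(3,\F^{(k)})$, so $m_p(R(\Gamma))\le 3$. The only real subtlety is ensuring the scalar-avoidance condition is maintained through the induction, which as noted is automatic from the triviality of the centre of a non-trivial free product; everything else reduces to citing the structural facts already set up in the paper.
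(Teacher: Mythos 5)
Your argument is correct, but it takes a different route from the paper. The paper's proof is a one-liner: Theorem 1.8 of \cite{kmkb} is stated for finite \emph{forests}, not just trees, so $R(\Gamma)$ embeds directly into ${\mathcal R}=R(P_4)$ and Corollary \ref{mainc} finishes immediately, with no free product construction needed. You instead use only the tree case of the Kim--Koberda embedding and then reassemble the forest via Nisnevi\v{c}'s free product theorem, keeping the dimension at $3$ by the scalar-avoidance clause; your verification of that clause (a scalar in the image would be central in $R(P_4)$ by faithfulness, and $R(P_4)$ has trivial centre since no vertex of $P_4$ is adjacent to all others, and likewise a nontrivial free product has trivial centre at each inductive step) is sound. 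What your approach buys is that it only needs the weaker tree statement quoted in the introduction, at the cost of extra machinery; what it costs is a small technical point you should flag when iterating: after the first application the two factors live over possibly different fields $\F'$ and $\F$ of characteristic $p$, so to quote Nisnevi\v{c} as stated you should first embed both into a common field (e.g.\ the universal coefficient domain of characteristic $p$), which is harmless but worth a sentence. With that remark added, your proof is complete, just longer than the paper's.
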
 
\begin{proof}
Theorem 1.8 of \cite{kmkb} shows that the right angled Artin group
$R(\Gamma)$ is a subgroup of $\cal R$.
\end{proof}

We note that \cite{drms} Theorem 2 shows that all of these RAAGs are
3-manifold groups, and indeed these are almost exactly
the graphs $\Gamma$ such that $R(\Gamma)$ is a 3-manifold group; in general
there can also be components which are triangles, whereupon we will still
have $m_p(R(\Gamma))=3$. In particular any 3-manifold group which is also
a RAAG will have a faithful 3-dimensional linear representation but this
is not true for 3-manifolds in general, as will be mentioned in Section 4.

\begin{co} For the $n$-cycle $C_n$ we have 
$m_p(R(C_3))=1$ for every characteristic $p$, 
$m_p(R(C_4))=4$ for every characteristic
$p$ and $m_0(R(C_n))=3$ for $n\geq 5$ (although we do not know here if
$m_p(R(C_n))$ is finite for any positive $p$).
\end{co}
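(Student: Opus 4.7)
The plan is to handle the three ranges of $n$ separately, each of which follows quickly from earlier results of the paper together with the cited embedding machinery of \cite{kmkb}.

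For $n=3$ the cycle $C_3$ is the complete graph on three vertices, so $R(C_3)\cong\Z^3$, which embeds in $\F_p(x_1,x_2,x_3)^*\leq GL(1,\F_p(x_1,x_2,x_3))$ by the observation in Section~2 (and in $\C^*$ for characteristic zero); hence $m_p(R(C_3))=1$ in every characteristic.

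For $n=4$ the two pairs of non-adjacent vertices of $C_4$ each generate a copy of $F_2$, and these two copies commute elementwise, giving $R(C_4)\cong F_2\times F_2$. Corollary~\ref{nosq} then yields $m_p(R(C_4))\geq 4$ for every $p$, while the remark immediately after it (direct sum of two faithful 2-dimensional representations of $F_2$, available in every characteristic because $m_p(F_2)=2$ as noted in Section~2) gives the matching upper bound.

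For $n\geq 5$ we first establish $m_p(R(C_n))\geq 3$ via Proposition~\ref{2d}. Each vertex of $C_n$ has exactly two neighbours and $n-1\geq 4$, so no vertex is joined to every other vertex and the vertex-removal step in Proposition~\ref{2d} is vacuous; the remaining graph is $C_n$, which is connected and therefore fails the ``at least two complete components'' requirement, ruling out $m_p=2$. Since $R(C_n)$ is non-abelian, $m_p\geq 3$. For the matching upper bound in characteristic zero, the case $n=5$ is Wang's theorem quoted in the preceding discussion, and for $n\geq 6$ we invoke the main embedding theorem of \cite{kmkb}: the graph $C_n$ appears as an induced subgraph of the extension graph of $C_5$ for every $n\geq 5$, yielding $R(C_n)\injects R(C_5)$, and composing with Wang's 3-dimensional representation finishes the proof.

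The only non-routine step is the embedding $R(C_n)\injects R(C_5)$ for $n\geq 6$; this is the principal obstacle and is precisely what the Kim--Koberda extension-graph machinery of \cite{kmkb} supplies. No upper bound in positive characteristic is attempted, consistent with the parenthetical caveat in the statement.
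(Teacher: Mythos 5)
Your proposal is correct and follows essentially the same route as the paper: $R(C_3)\cong\Z^3$, $R(C_4)\cong F_2\times F_2$ handled by Corollary \ref{nosq} and the direct-sum remark, the lower bound $m_p\geq 3$ for $n\geq 5$ from Proposition \ref{2d}, Wang's representation for $C_5$, and the Kim--Koberda embedding $R(C_n)\injects R(C_5)$ for $n\geq 6$ (their Theorem 1.12, proved via the extension graph as you indicate). The only difference is that you spell out the lower-bound and $n=4$ details which the paper leaves to the preceding discussion.
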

\begin{proof}
The group $R(C_3)$ is $\Z^3$, the group $R(C_4)$ is covered by
Corollary \ref{nosq} and \cite{kmkb} Theorem 1.12 shows that
the group $R(C_5)$ contains $R(C_m)$ for every $m\geq 6$.
\end{proof}
We finish this section with two
questions suggested by these results:\\
\hfill\\
(1) Is there a universal $N$ such that all RAAGs embed in $GL(N,\F)$?
In particular, do we have an example of a graph
$\Gamma$ such that $m(R(\Gamma))>4$?\\
\hfill\\
(2) Are all RAAGs linear in some positive characteristic, namely given any
RAAG $R(\Gamma)$ is there a prime $p$ 
with $m_p(R(\Gamma))<\infty$?\\
\hfill\\
If yes then (2) would have an interesting and powerful consequence, in
that a group which is not linear in any
positive characteristic cannot be virtually special (namely will have no finite
index subgroup that embeds in a RAAG). Note that for
$\Gamma$ a graph of $n$ vertices, 
the standard faithful linear representations of $R(\Gamma)$ into $GL(2n,\Z)$
obtained using Coxeter groups do not work in positive characteristic as
they contain many unipotent elements.  

\section{Braid groups and related groups}

\subsection{Braid groups}
The braid group $B_n$ is well known to have the following
presentation:
\[\langle \sigma_1,\ldots ,\sigma_{n-1}|
\sigma_i\sigma_j=\sigma_j\sigma_i\mbox{ if }|i-j|\geq 2,
\sigma_i\sigma_{i+1}\sigma_i=\sigma_{i+1}\sigma_i\sigma_{i+1}
\mbox{ for } 1\leq i\leq n-2\rangle.\]
Note that this implies all generators are conjugate to each other.
There is a considerable history regarding the question of 
whether braid groups are linear.
The braid group $B_3$ with presentation $\langle x,y|x^2=y^3\rangle$
has long known to be linear as it embeds in $\glw$ 
(as was first shown in \cite{magpkn} Lemma 2.1)
and 2 is clearly minimal so that $m(B_3)=m_0(B_3)=2$.
Indeed on taking 
\[x=\sma{ll} 0&s^3\\s^3&0\fma\qquad\mbox{ and }\qquad
y=\sma{rr} -s^2&-s^2\\-s^2&0\fma,\]
where $s$ is any transcendental over the prime subfield $\PP=\Q$ or
$\F_p$, it immediately follows from Proposition \ref{shl} that we
obtain a faithful representation of $B_3$ in $GL(2,\PP(s,t))$,
thus $m_p(B_3)=2$ for any characteristic $p$.

However the linearity
of $B_n$ for $n\geq 4$ remained open for many years. The well known
(reduced) Burau representation of $B_n$ in characteristic zero
(over the field $\Q(t)$ and indeed the ring $\Z[t^{\pm 1}]$
for $t$ an indeterminate) is $n-1$ dimensional
but it was shown by Moody in \cite{moopams} that this is unfaithful
for $n\geq 10$, with \cite{lop} taking this down to $n\geq 6$ and
\cite{bige} doing $n=5$. 
But the Lawrence-Krammer representation of dimension $n(n-1)/2$
(also in characteristic zero as it is over the field $\Q(t,q)$ or
even the ring $\Z[t^{\pm 1},q^{\pm 1}]$ for $t$ and $q$ independent
indeterminates)
which was found much later was shown to be faithful for all
braid groups by Bigelow in \cite{bigel} and by Krammer in \cite{kra},
so that $m_0(B_n)\leq n(n-1)/2$.

However we do not know
the exact value of $m_p(B_n)$ for any characteristic $p$ and any $n\geq 4$.
Indeed for $p>0$ we do not even know that $m_p(B_n)$ is finite. 
We now examine results in the literature on finite dimensional
representations of the braid groups $B_n$.

In \cite{dfg} it was shown in Proposition 2 that if 
$B_n$ has a faithful representation in $GL(\C,d)$ then it also has
a faithful irreducible representation in $GL(\C,d')$ for $d'\leq d$.
Then in \cite{for} the irreducible complex representations of $B_n$
having degree at most $n-1$ were obtained. We describe these results, 
first noting that a specialisation is the replacement of the indeterminate
$t$ with a non zero complex number, and that a tensor product of a 
given representation $\rho$ over $\C$
of $B_n$ with a 1-dimensional representation $\chi_y$ for $y\in\C^*$
can be thought of as replacing each matrix $\rho(\sigma_i)$ with
the scalar multiple $y\rho(\sigma_i)$ for $1\leq i\leq n-1$. 
Lemma 6 and Corollary 8 of this paper
show that most specialisations of
the Burau representation of $B_n$ are irreducible, although for each $n$
a finite number of specialisations of this $n-1$ degree representation split 
into an irreducible representation of degree $n-2$ and the trivial 
representation of degree 1.
Moreover Theorem 22 there states that if $n\geq 7$ then 
these are the only irreducible
representations over $\C$
of $B_n$ having degree $d$ for $1<d<n$, up to conjugacy and taking the tensor
product with a 1-dimensional representation. We take a brief look at the
question of faithfulness of these representations, showing that any one of them
being faithful implies that the Burau representation is faithful.

First of all any specialisation of the Burau representation being
faithful implies that the Burau representation itself is faithful too,
as the ring isomorphism 
$\Z[t^{\pm 1}]\rightarrow\Z[s^{\pm 1}]\subseteq\C$ extends to 
to a group homomorphism from
$\beta(B_4)$ to $\beta_s(B_4)$, where $\beta$ is the original 
Burau representation of $B_4$ over $\Z[t^{\pm 1}]$
and $\beta_s$ is the specialisation of $\beta$ at $s\in\C^*$.

Now suppose we have $s,y\in\C^*$ such that the tensor product
of $\chi_y$ with $\beta_s$ is a faithful representation. If the original
Burau representation $\beta$ of $B_n$ is not faithful then nor is
$\beta_s$, so any element in the kernel of $\beta_s$ would have to be
a scalar matrix in our faithful tensor product representation, and 
thus will lie in the centre of
$B_n$. This is well known to be an infinite cyclic subgroup
$\langle z\rangle$ for $z=(\sigma_1\sigma_2\ldots \sigma_{n-1})^n$ and
represented by $(-s)^nI_{n-1}$ under $\beta_s$, so $-s$ will be an $N$th root
of unity for some $N\geq 1$. But in the Burau representation $\beta$
each $\sigma_i$ is diagonalisable with eigenvalues 1 (repeated $n-2$ times)
and $-t$, so that we now have $\beta_s(\sigma_i^N)=I_{n-1}$ and thus
$\sigma_i^N$ maps to a scalar matrix in the tensor product representation.
As this representation is assumed faithful, we conclude that
$\sigma_i^N$ is in the centre of $B_n$ and thus is a 
scalar matrix in the original Burau representation $\beta$
by Schur's lemma,
as $\beta$ is irreducible. But actually
$\beta(\sigma_i^N)$ cannot be scalar by considering eigenvalues,
so we have a contradiction.

Given that the Burau representation is unfaithful 
as mentioned above, we can conclude that for $n\geq 7$ no irreducible
representation of $B_n$ over $\C$
with dimension $d\leq n-1$ is faithful and
thus the same is true for any $d$ dimensional representation. 
Thus we have $m_0(B_n)\geq n$ for $n\geq 7$, and in particular it tends to 
infinity with $n$.

The cases $n=4,5,6$ are also considered, whereupon there are a few
other irreducible representations in these small
dimensions. In particular for $n=4$ we already know that
$m_p(B_4)>2$ for all $p$ by the commutative
transitive comment in Section 2, so we have $3\leq m_0(B_4)\leq 6$ and
the same for $m(B_4)$. Again from this paper,
Theorem 13 shows that any irreducible representation
in degree 3 over $\C$ of $B_4$ is either the same as
those described above for $n\geq 7$
or one of two types of representation which clearly can never be
faithful. Thus the above argument also applies here to tell us that
there exists a faithful representation of the braid group $B_4$ into
$\glc$ if and only if the 3-dimensional
Burau representation is faithful.

As for positive characteristic, we can regard the Burau representation
as being over the field $\F_p(t)$ because we can reduce using the ring
homomorphism from $\Z[t^{\pm 1}]$ to $\F_p[t^{\pm 1}]$. A look at the
proofs in \cite{for} reveals that this result for $n=4$, as well as the above
statement that $m_0(B_n)\geq n$ for $n\geq 7$, goes through if we
replace $\C$ with our algebraically closed
universal coordinate domain in any positive
characteristic. We also need that a faithful representation implies
a faithful irreducible representation of no bigger degree, but again
the same argument goes through in positive characteristic. 
Thus we also have $m_p(B_n)\geq n$ for $n\geq 7$ and every $p$. 
The interesting point here about $n=4$ is that
Cooper and Long have shown in \cite{cl}
that the Burau representation is not faithful over $\F_2[t^{\pm 1}]$,
and also in \cite{clep} for $\F_3[t^{\pm 1}]$. 
It is hard to say
what this suggests for the characteristic zero case, though it follows
that showing $m_p(B_4)=3$ for any prime $p\geq 5$ would imply that the
characteristic zero Burau representation is faithful, which is
Question 1 in \cite{clep}. We note that Lemma 3 in \cite{leesong}
showed that for $B_3$ the associated Burau representation is faithful
in every positive characteristic.

We summarise the results
above in the following proposition:
\begin{prop} For the braid group $B_n$ we have $m_p(B_n)\geq n$ for $n\geq 7$ 
and every characteristic $p$, as well as $m_0(B_n)\leq n(n-1)/2$ though
we do not know if $m_p(B_n)$ is finite for $p>0$.

For $n=4$ we have $3\leq m(B_4)\leq m_0(B_4)\leq 6$ and 
$m(B_4)=3$ if and only if the Burau representation
is faithful. We do however know that
$m_2(B_4),m_3(B_4)>3$ but again we do not 
know that $m_p(B_4)$ is finite for any prime $p$. 
\end{prop}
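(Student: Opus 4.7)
The plan is to assemble the ingredients already distributed through the preceding discussion; essentially the proposition is a summary statement, so the role of the proof is to arrange the pieces in logical order.

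For the bound $m_p(B_n) \geq n$ when $n \geq 7$, I would start from a hypothetical faithful representation of dimension $d \leq n-1$, apply \cite{dfg} Proposition 2 to reduce to a faithful irreducible representation of some degree $d' \leq d$, and then invoke \cite{for} Theorem 22 to conclude that this irreducible representation is, up to conjugation and tensoring with a 1-dimensional character $\chi_y$, a specialisation $\beta_s$ of the Burau representation (possibly with a trivial summand split off). The central argument rehearsed immediately before this proposition then converts such faithfulness into faithfulness of the Burau representation $\beta$ itself: the centre $\langle z\rangle$ must map to scalars, so $-s$ is some $N$-th root of unity and $\beta_s(\sigma_i^N)=I$; Schur's lemma applied to the irreducible $\beta$ would then force $\beta(\sigma_i^N)$ to be scalar, contradicting its eigenvalue structure. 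Since $\beta$ is unfaithful for $n \geq 5$ by \cite{moopams}, \cite{lop} and \cite{bige}, this gives the required contradiction. The upper bound $m_0(B_n) \leq n(n-1)/2$ is simply the Bigelow--Krammer faithfulness \cite{bigel}, \cite{kra} of the Lawrence--Krammer representation.

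For $n = 4$, the lower bound $m(B_4) \geq 3$ comes from the commutative transitivity obstruction in Section 2, applied to the triple $\sigma_1, \sigma_2, \sigma_3$ (where $[\sigma_1,\sigma_3]=1$ but $\sigma_2$ does not commute with both, while $\sigma_2$ is not central). The bound $m_0(B_4) \leq 6$ is again Lawrence--Krammer. The equivalence $m(B_4)=3 \Longleftrightarrow \beta$ faithful uses \cite{for} Theorem 13, classifying irreducible 3-dimensional complex representations of $B_4$ as either (twists of specialisations of) Burau or one of two evidently unfaithful families; the same deduction as in the $n\geq 7$ case then applies. Finally $m_p(B_4) > 3$ for $p = 2, 3$ follows by running this classification argument over positive characteristic and invoking the Cooper--Long theorems \cite{cl}, \cite{clep} that $\beta$ is unfaithful over $\F_2[t^{\pm 1}]$ and $\F_3[t^{\pm 1}]$ respectively.

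The step that needs the most care, and which I would flag as the main obstacle, is confirming that \cite{dfg} Proposition 2 together with \cite{for} Theorems 13 and 22 remain valid over a universal coordinate domain in positive characteristic, rather than only over $\C$. The preceding discussion asserts both by inspection of the original proofs, but since the whole positive-characteristic half of the proposition (notably $m_p(B_n)\geq n$ for all $p$ and $m_2(B_4), m_3(B_4) > 3$) rests on this assertion, a careful audit of those proofs is really what the argument hinges on; beyond that no new ideas are needed.
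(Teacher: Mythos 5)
Your proposal follows the paper's route essentially verbatim: the proposition is a summary of the preceding discussion, and you reassemble it with the same ingredients (reduction to a faithful irreducible representation via \cite{dfg}, Formanek's classification theorems, the centre/Schur argument passing from twisted specialisations back to the Burau representation, Moody--Long--Paton--Bigelow unfaithfulness, Lawrence--Krammer for the upper bounds, and Cooper--Long for $p=2,3$), and your flag about re-checking \cite{dfg} and \cite{for} over a universal coordinate domain in positive characteristic is exactly the caveat the paper itself relies on.

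The one step that fails as written is your justification of the lower bound $m(B_4)\geq 3$. The obstruction from Section 2 requires a chain: elements $x,y,z$ with $x$ commuting with $y$ and $y$ with $z$ but $xz\neq zx$, whereupon $y$ must map to a scalar and hence be central. Your triple $\sigma_1,\sigma_2,\sigma_3$ has no such chain: $\sigma_2$ commutes with neither $\sigma_1$ nor $\sigma_3$, so the hypothesis is simply not satisfied and no contradiction is produced (two commuting non-central elements alone, such as $\sigma_1$ and $\sigma_3$, are no obstruction to a 2-dimensional embedding). A correct witness is, for example, $x=\sigma_3$, $y=\sigma_1$, $z=(\sigma_1\sigma_2)^3$: here $z$ is central in $\langle\sigma_1,\sigma_2\rangle\cong B_3$, so it commutes with $\sigma_1$, but it does not commute with $\sigma_3$ (otherwise it would be central in $B_4$, which is impossible since central elements are powers of the full twist and have exponent sum a multiple of $12$, not $6$). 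The criterion then forces $\sigma_1$ to be central in any subgroup of $GL(2,\F)$ isomorphic to $B_4$, contradicting the fact that $\sigma_1$ is not central in $B_4$; this gives $m_p(B_4)>2$ for every characteristic $p$, which is what the paper invokes. With that repair your argument coincides with the paper's.
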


\subsection{Aut and Out of a free group}

For the groups $Aut(F_n)$ and $Out(F_n)$ we do have results but they are
nearly all negative. It was shown in \cite{fp} using HNN extensions and the
representation theory of algebraic groups that $Aut(F_n)$ is not linear over
any field for $n\geq 3$, thus $m(Aut(F_n))=\infty$. It is clear that for
$n<m$ we have $Aut(F_n)\leq Aut(F_m)$ by fixing the last $m-n$ elements
of the free basis and back in \cite{mgtr} it was pointed 
out that $Aut(F_n)$ embeds in $Out(F_m)$ because $Aut(F_n)\cap Inn(F_m)$ 
is trivial.
(However we think that the unsupported claim made there that $Out(F_n)\leq
Out(F_m)$ must be a misprint.) It was also shown that if $Aut(n)$ has a
faithful representation then it has a faithful irreducible representation
of no higher degree. Although this point is now moot, it was used for
the proof in \cite{dfg} of the equivalent result for the braid groups
mentioned above.

However $Out(F_3)$ seems to behave somewhat differently, 
with its linearity over any characteristic currently open and to be found
as Problem 15.103 in the Kourovka Notebook \cite{kou}.
As for $Aut(F_2)$ and $Out(F_2)$, the latter is isomorphic to $GL(2,\Z)$
so we certainly have $m_0(Out(F_2))=m(Out(F_2))=2$. 
Finally $Aut(F_2)$ is linear, owing
to its close connection with the braid group $B_4$ as we now indicate.
For $F_2$ free on the elements $x,y$, it is well known that any element
$\alpha\in Aut(F_2)$ sends the commutator $c=[x,y]=xyx^{-1}y^{-1}$ to a
conjugate of $c^{\pm 1}$. Let $Aut^+(F_2)$ be the index 2 subgroup of
$Aut(F_2)$ such that $c$ is sent to a conjugate of itself (we call these
the orientation preserving automorphisms and the others orientation reversing,
which is a description that is also well defined in $Out(F_2)$).
Then the main theorem of \cite{dfg} from 1982
shows that the quotient of $B_4$ by its
centre $Z(B_4)$ is isomorphic to $Aut^+(F_2)$. 
We have already mentioned that this paper also shows that
a faithful complex representation of $B_n$ implies the existence of a
faithful irreducible complex representation of no higher degree.
Hence if $m_0(B_4)=d$ then this
representation is irreducible without loss of generality, thus
Schur's Lemma implies that the centre $Z(B_4)$
maps to scalar matrices and so
we can tensor with a 1-dimensional representation to obtain a 
faithful degree $d$ representation of $B_4/Z(B_4)\cong Aut^+(F_2)$, and
consequently a faithful degree $2d$ representation of $Aut(F_2)$.

Given that $d$ is known to be at most 6, we have that 
$Aut^+(F_2)$ is a subgroup of $GL(6,\C)$ and therefore $Aut(F_2)$ is a subgroup
of $GL(12,\C)$. 
Regarding lower bounds for $m_0(Aut(F_2))$, it 
was shown
in \cite{tenek} from 1986 that $Aut(F_2)$ cannot be embedded in $GL(5,\C)$.
As for $d=6$, of course we have by the above discussion that if the
Burau representation of $B_4$ is faithful then $Aut(F_2)$ is a subgroup
of $GL(6,\C)$. Moreover this paper also showed the converse, meaning that
we still only know $6\leq m_0(Aut(F_2))\leq 12$ and 
$3\leq m_0(Aut^+(F_2))\leq 6$.

\subsection{Mapping class groups}
Let ${\cal M}_g$ be the mapping class group of a closed orientable
surface of genus $g\geq 2$. It was shown in \cite{kork} and in
\cite{bgbd} using the braid
linearity results that ${\cal M}_2$ is linear, indeed the latter paper
obtains $m_0({\cal M}_g)\leq 64$. 
It is a famous unsolved question as
to whether ${\cal M}_g$ is linear for $g\geq 3$ 
so it seems that there is nothing more
to be said. We do however take note of \cite{flm} Theorem 1.6 which
implies that there are no low dimensional faithful representations. It
states that if $H$ is any finite index subgroup of ${\cal M}_g$ then
$m_0(H)\geq 2\sqrt{g-1}$.
 
\subsection{3-manifold groups}

Here a 3-manifold group will mean the fundamental group of a {\it compact}
3-manifold, so the group will be finitely presented. Linearity of such
groups has been studied over the years but a surprising consequence
obtained from applications of the recent Agol-Wise results is that most
3-manifold groups are linear even over $\Z$. Indeed on taking compact
orientable irreducible 3-manifolds $M^3$, we have that if $M^3$ admits
a metric of non-positive curvature then $\pi_1(M^3)$ is linear over
$\Z$. Linearity of some other special cases such as Seifert fibred spaces
is also known, meaning that amongst the fundamental groups of these
3-manifolds, linearity is only open for closed graph manifolds which
do not admit a metric of non-positive curvature. If these were also
linear over $\C$ then, as we can move up
or down subgroups of finite index (ie finite covers)
and take free products (ie connected sum) without affecting linearity,
every 3-manifold group would be linear. It is even possible that every 
3-manifold group could be linear over $\Z$.

In \cite{me3} we gave an example of one of these closed graph manifolds
where linearity of its fundamental group is unknown and showed that this group
did not embed in $GL(4,\F)$ for any field $\F$, thus answering a question
of Thurston. We note that the resulting 3-manifold was already known to be
virtually fibred, so that it is even unknown whether all semidirect
products of the form $\pi_1(S_g)\rtimes\Z$ (where $S_g$ is the closed
orientable surface of genus $g$) are linear. We also note that there seems to
be very little on linearity of 3-manifolds over fields of positive
characteristic.

However in the zero characteristic case, it should be said that these 
faithful linear representations over $\Z$ (obtained using virtually special
groups) are likely to be of vast size and so will be nowhere near the
minimum dimension of a faithful representation over $\C$. For instance all
finite volume hyperbolic orientable 3-manifold groups embed in $\slw$
but not in $SL(2,\Z)$. Indeed
the only example in the literature we could find
of a faithful linear representation over $\Z$ of any one of these 3-manifold
groups is in \cite{lr}, where Question 6.1 asks for any faithful representation
of such a group in $SL(3,\Z)$ and
Remark 7.1 points out that the figure 8 knot
group embeds in $SL(4,\Z)$.

Finally we note that amongst torsion free finitely presented groups,
the hyperbolic 3-manifold groups are precisely the ones admitting
faithful {\it discrete} embeddings in $\slw$ but there are plenty
of other 3-manifold groups with faithful
but non discrete embeddings.

\subsection{Word hyperbolic and random groups}

We first note that there do exist word hyperbolic groups which are
not linear over any field. However in the context of the usual
models of random groups, where most groups turn out to be word
hyperbolic, a big breakthrough of the Agol-Wise results was that
at low densities, and certainly for models involving a fixed
number of generators and relators, a random group is virtually
special and so linear over $\Z$. 

As for the minimum dimension,
in a word hyperbolic group all centralisers are virtually cyclic so
our techniques for showing that there are no low dimensional faithful
representations will not work here. Indeed closed hyperbolic orientable
3-manifold groups and hyperbolic limit groups, for instance, will embed
in $\slw$. However we do have one result on the lack of 2-dimensional 
embeddings which was outlined in \cite{flr} and \cite{ds}.
\begin{prop}
If $\langle x,y\,|\,r(x,y)\rangle$ is a random 2-generator 1-relator
presentation defining the group $G_r$ with cyclically reduced word
$r$ having length $l$ then the probability that $m(G_r)>2$ tends to
1 exponentially as $l$ tends to infinity.
\end{prop}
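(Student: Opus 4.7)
The plan rests on the observation, recalled in Section 2, that any subgroup $H\le GL(2,\F)$ is commutative transitive modulo its scalar matrices: whenever $x,y,z\in H$ satisfy $[x,y]=[y,z]=1$ and $[x,z]\ne 1$, the middle element $y$ must be a scalar and is therefore central in $H$. Consequently, if a group $G$ has trivial centre together with some commutation chain $x\,\text{-}\,y\,\text{-}\,z$ of this shape with $y$ nontrivial, then $G$ cannot embed in $GL(2,\F)$ for any $\F$, which would give $m(G)>2$.

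The proof then consists of two quantitative ingredients, each of which should be shown to hold with probability tending to $1$ exponentially in the relator length $l$. First, that $G_r$ has trivial centre: a random cyclically reduced word $r$ of length $l$ satisfies a $C'(1/6)$ small cancellation condition with exponentially small failure probability, and the resulting groups $G_r$ are then non-elementary word hyperbolic with trivial centre. Second, that $G_r$ admits (or is forced by the representation theory to admit) a failure of commutative transitivity at some non-central element; combined with trivial centre this blocks any faithful two-dimensional representation. The natural route is to show that for a random relator the algebraic shape of $G_r$ is incompatible with the very restrictive structure of $2$-generator subgroups of $GL(2,\F)$, which modulo scalars are Kleinian/Fuchsian-type or virtually solvable.

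The hard part is the second ingredient, since torsion free word hyperbolic groups satisfy commutative transitivity internally (their centralisers are cyclic), so the obstruction cannot be read off directly from a commutation chain in $G_r$ itself. The resolution, sketched in \cite{flr} and \cite{ds}, is to work in the representation variety $\mathrm{Hom}(G_r,GL(2,\F))$: the single equation $r(X,Y)=I$ cuts out a subvariety of $GL(2,\F)^2$, and one argues that for a random relator of length $l$ the slice of representations which could possibly be faithful becomes empty with exponentially high probability, because the combinatorics of a generic $r$ force incompatible constraints on any would-be Zariski closure of the image in $PGL(2,\F)$. Packaging this incompatibility as an explicit exponential estimate in $l$, controlling simultaneously the probabilities for non-elementary hyperbolicity, trivial centre, and the representation-variety emptiness, is the technical core of the argument.
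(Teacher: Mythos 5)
There is a genuine gap here. You correctly observe that the naive obstruction (a commutation chain forcing a central element) cannot work, because a random $G_r$ is torsion free word hyperbolic and hence commutative transitive internally; but having closed off that route you do not supply a concrete replacement. The appeal to ``representation-variety emptiness'' and ``incompatible constraints on any would-be Zariski closure'' is a placeholder, not an argument: nothing in the proposal explains what property of a generic relator actually obstructs a faithful $2$-dimensional representation, nor where an estimate that is exponential in $l$ would come from. Indeed you say yourself that packaging this as an exponential estimate is ``the technical core,'' which is precisely the part left undone.

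The missing idea is the theorem of Magnus used in \cite{flr} (Theorem 3.1 there): if the one-relator group $G_r$ is not metabelian but embeds in $SL(2,\F)$, then $r(x,y)$ must be conjugate in the free group to $r(x^{-1},y^{-1})$ or to $r^{-1}(x^{-1},y^{-1})$. Over an arbitrary field this is realised by the ``hyperbolic involution'': for $X,Y\in SL(2,\F)$ the matrix $Z=XY-YX$ satisfies $ZX=X^{-1}Z$ and $ZY=Y^{-1}Z$, so if $\det Z\neq 0$ one gets an element of $GL(2,\F)$ inverting both generators and Magnus's result on one-relator presentations applies, while $\det Z=0$ forces $\langle X,Y\rangle$ to be metabelian (the commutator then has $1$ as a repeated eigenvalue), which is excluded since a random $G_r$ is non-elementary hyperbolic; one also rescales to pass from $GL(2,\F)$ to $SL(2,\F)$ using that $G_r$ generically has no centre. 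The exponential bound is then pure counting: the symmetry says that $r^{\pm 1}(x^{-1},y^{-1})$ is a cyclic permutation of $r(x,y)$, so roughly the first $k\leq l/2$ (or $l/2+1$) letters of $r$ determine the whole word, placing $r$ in an exponentially small subset of the cyclically reduced words of length $l$. Without this combinatorial symmetry criterion and the counting step, your outline does not yield the stated probability estimate.
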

\begin{proof}
Theorem 3.1 of \cite{flr} (based on results of Magnus) shows that if
$G_r$ has the above presentation and is not metabelian but admits a
faithful representation into $\slw$ then $r(x,y)$ must be conjugate
to $r(x^{-1},y^{-1})$ or $r^{-1}(x^{-1},y^{-1})$ in the free group
on $x,y$. This can also be seen in any field $\F$ by using the
``hyperbolic involution'': given any $X,Y\in\sltf$ we have that $ZX=X^{-1}Z$
and $ZY=Y^{-1}Z$ for $Z$ the 2 by 2 matrix over $\F$ defined by $Z=XY-YX$.
Thus if $\mbox{det}(Z)\neq 0$ then $Z\in\glw$ conjugates $X$ to $X^{-1}$
and $Y$ to $Y^{-1}$, in which case we have the above result by applying
the same theorem of Magnus on 1-relator presentations. If $Z$ has zero
determinant then $\langle X,Y\rangle$ will be metabelian, as can be seen
because now $XYX^{-1}Y^{-1}-I_2$ has zero determinant, so $XYX^{-1}Y^{-1}$
has 1 as a repeated eigenvalue. This means we can apply \cite{me3} Proposition
A.1 to conclude that $\langle X,Y\rangle$ is metabelian. This Proposition
is stated for $\C$ but we only need part (i) which works for an algebraically
closed field of any characteristic. But without loss of generality we
can assume that $\F$ is algebraically closed, which we do for the
rest of the proof.

Now suppose that $G_r$ embeds in $\gltf$, with $x,y$ sent to matrices
$X,Y$. On replacing these with $\lambda X$ and $\mu Y$ where $\lambda,\mu\in
\F$ are such that $\lambda^2=1/\mbox{det}(X)$ and $\mu^2=1/\mbox{det}(Y)$,
we obtain a new embedding of $G_r$ in $\sltf$ provided there were
no scalar matrices except 
$I_2$ in the original embedding.
But generically $G_r$ will be non-elementary word hyperbolic,
so will not be metabelian and have no centre. 
Hence we now know $r(x,y)$ is conjugate 
in $F_2$ to $r^{\pm 1}(x^{-1},y^{-1})$ which is also cyclically reduced
and of length $l$. This means that either $r(x^{-1},y^{-1})$ or
$r^{-1}(x^{-1},y^{-1})$ is a cyclic permutation of $r(x,y)$. In the
first case there will be $k\leq l/2$ where we can take either the first $k$
or the last $k$ letters of the word $r(x,y)$ and move them to the back or
to the front to obtain $r(x^{-1},y^{-1})$, whereupon we see that these
$k$ letters determine the whole word, so that $r$ lies in an exponentially
small subset of the cyclically reduced words of length $l$. A similar
argument works in the second case, whereupon we see at most $l/2+1$ letters
determine the whole word.
\end{proof}

\section[Representations of free by cyclic groups]{Low 
dimensional representations of free by cyclic groups}
In this section, a free by cyclic group will mean the semidirect product
$F_n\rtimes_\alpha\Z$ formed by taking an automorphism $\alpha$ of the
free group $F_n$ of rank $n\geq 2$. (Thus ``free'' here means non abelian
free of finite rank and ``cyclic'' means infinite cyclic.) Given
$\alpha\in Aut(F_n)$ (or rather in $Out(F_n)$ as automorphisms that are
equal in $Out(F_n)$ give rise to isomorphic groups), 
we are interested in the possible values of 
$m(F_n\rtimes_\alpha\Z)$. However we will see in this section 
that sometimes it is more straightforward to obtain results on
characteristic zero representations, whereupon we are
really looking at $m_0(F_n\rtimes_\alpha\Z)$.

The following result is
folklore.
\begin{lem} \label{inout}
If $\Gamma$ is a group with no centre and 
$\alpha$ is an automorphism of $\Gamma$ that has infinite order
in $Out(\Gamma)$
then the semidirect product $G=\Gamma\rtimes_\alpha\Z$ embeds
in $Aut(\Gamma)$.
\end{lem}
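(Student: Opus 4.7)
The plan is to construct the embedding explicitly. Let $t$ denote a generator of the $\Z$ factor, so $G$ is generated by $\Gamma$ and $t$ subject to $t\gamma t^{-1}=\alpha(\gamma)$. Define $\phi:G\to Aut(\Gamma)$ on these generators by sending $\gamma\in\Gamma$ to the inner automorphism $c_\gamma:x\mapsto\gamma x\gamma^{-1}$ and sending $t$ to $\alpha$, so that a general element takes the form $\phi(\gamma t^n)=c_\gamma\circ\alpha^n$.

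First I would check that $\phi$ is a well-defined homomorphism. This reduces to verifying the defining relation, namely that $\alpha\circ c_\gamma\circ\alpha^{-1}=c_{\alpha(\gamma)}$ in $Aut(\Gamma)$, which is an immediate one-line calculation. Equivalently, using the semidirect product multiplication $(\gamma_1,n_1)(\gamma_2,n_2)=(\gamma_1\alpha^{n_1}(\gamma_2),n_1+n_2)$, one just confirms $\phi$ respects this by moving $\alpha^{n_1}$ past $c_{\gamma_2}$ via the same conjugation identity. This step is routine.

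The crucial step is showing $\ker\phi=1$. Suppose $\phi(\gamma t^n)=\mathrm{id}_\Gamma$, i.e.\ $c_\gamma\circ\alpha^n=\mathrm{id}$ in $Aut(\Gamma)$. Then $\alpha^n=c_{\gamma^{-1}}\in Inn(\Gamma)$, so the image of $\alpha^n$ in $Out(\Gamma)$ is trivial. Since $\alpha$ has infinite order in $Out(\Gamma)$ by hypothesis, this forces $n=0$. But then $c_\gamma=\mathrm{id}_\Gamma$, meaning $\gamma$ lies in $Z(\Gamma)$, which is trivial by the centrelessness assumption. Hence $\gamma=1$ and $\phi$ is injective.

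The main obstacle, if any, is just keeping track of the two hypotheses: centrelessness of $\Gamma$ is precisely what kills the $\gamma$-part of the kernel, while infinite order of $\alpha$ in $Out(\Gamma)$ (as opposed to $Aut(\Gamma)$) is precisely what kills the $t^n$-part. Neither hypothesis can be dropped, which is why both appear in the statement. Once the map is written down, the verification is mechanical and no further technical work is required.
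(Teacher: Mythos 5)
Your proposal is correct and follows essentially the same argument as the paper: map $\gamma\mapsto\iota_\gamma$, $t\mapsto\alpha$, use the identity $\alpha\iota_\gamma\alpha^{-1}=\iota_{\alpha(\gamma)}$ to get a homomorphism onto $\langle\alpha,Inn(\Gamma)\rangle$, and kill the kernel using infinite order in $Out(\Gamma)$ for the $t^n$ part and centrelessness for the $\gamma$ part. Your write-up is in fact slightly more explicit than the paper's about the final injectivity step, but there is no substantive difference.
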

\begin{proof} 
The no centre condition means that $Aut(\Gamma)$ contains a copy of
$\Gamma$ in the form of the inner automorphisms $Inn(\Gamma)$,
whereupon we write
$\iota_\gamma$ for the element of $Inn(\Gamma)$ which is conjugation
by $\gamma\in\Gamma$.
Now for any
$\alpha\in Aut(\Gamma)$ and $\gamma\in\Gamma$ we have $\alpha\iota_\gamma
\alpha^{-1}=\iota_{\alpha(\gamma)}$. Thus on taking the subgroup
$\langle \alpha,Inn(\Gamma)\rangle$ of $Aut(\Gamma)$ along with the map from
$G$ to this subgroup that sends 
$\gamma$ to $\iota_\gamma$ and the generator $t$ of $\Z$ to $\alpha$, we
have that all relations in $G$ are preserved so our map extends to a
surjective homomorphism. Now any element of $G$ can be written in the
form $\gamma t^n$. If this maps to the identity then $\alpha^n$ would
equal the inner automorphism $\iota_{\gamma^{-1}}$.
\end{proof}

\begin{prop} If the group $\Gamma$ has no centre
and  $Aut(\Gamma)$ embeds in $GL(d,\F)$ for $\F$ some field of characteristic
$p$ then any group of the form $G=\Gamma\rtimes_\alpha\Z$ has $m_p(G)\leq d$.
\end{prop}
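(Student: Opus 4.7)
The plan is to construct an explicit faithful representation of $G$ into $GL(d,\F(s))$, where $s$ is transcendental over $\F$; since $\F(s)$ has the same characteristic $p$ as $\F$, this will give $m_p(G)\leq d$. The basic building block is the homomorphism $\phi:G\to Aut(\Gamma)$ from the proof of Lemma \ref{inout}, defined by $\phi(\gamma)=\iota_\gamma$ for $\gamma\in\Gamma$ and $\phi(t)=\alpha$, composed with the hypothesised embedding $Aut(\Gamma)\hookrightarrow GL(d,\F)\subseteq GL(d,\F(s))$. Note that the defining relations of $G$ are preserved by $\phi$ exactly as in that lemma, so one only needs to worry about injectivity.

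The obstacle is that $\phi$ is only guaranteed to be injective when $\alpha$ has infinite order in $Out(\Gamma)$: if instead $\alpha^n=\iota_{\delta^{-1}}$ is inner for some $n\neq 0$, then the element $\delta t^n$ lies in $\ker\phi$. My plan to repair this is to twist $\phi$ by a 1-dimensional scalar character, in the same spirit as the proof of Lemma \ref{dp}. Letting $\pi:G\to\Z$ denote the quotient map with kernel $\Gamma$, I would define
\[
\psi:G\to GL(d,\F(s)),\qquad \psi(g)=s^{\pi(g)}\phi(g),
\]
which is a homomorphism because $s^{\pi(g)}$ sits in the centre $\F(s)^*I_d$ of $GL(d,\F(s))$ and so commutes with every $\phi(h)$.

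Injectivity is then the one remaining check. If $\psi(g)=I_d$, then $\phi(g)=s^{-\pi(g)}I_d$ is a scalar matrix lying in $GL(d,\F)$, so the transcendence of $s$ over $\F$ forces $\pi(g)=0$ and hence $\phi(g)=I_d$. This puts $g\in\Gamma$ with $\iota_g$ trivial in $Aut(\Gamma)$, whereupon the hypothesis that $\Gamma$ has trivial centre forces $g=e$. The whole content of the argument, and its only subtlety, is that the scalar twist by $s^{\pi(g)}$ separates elements in different cosets of $\Gamma$, so it uniformly absorbs the kernel of $\phi$ that can occur when $\alpha$ has finite order in $Out(\Gamma)$, without needing to split into cases according to the order of $\alpha$ in $Out(\Gamma)$.
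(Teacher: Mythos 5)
Your proof is correct and takes essentially the same approach as the paper: both twist the image of the stable letter by a transcendental scalar so that no nontrivial element can map into $GL(d,\F)$, the paper phrasing the final contradiction via determinants rather than via your observation that a scalar matrix $s^{-\pi(g)}I_d$ with entries in $\F$ forces $\pi(g)=0$. Your version merely absorbs the paper's case split between $\alpha$ having infinite or finite order in $Out(\Gamma)$ into one uniform argument.
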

\begin{proof}
By Lemma \ref{inout}, either $G$ is a subgroup
of $Aut(\Gamma)$ or $\alpha$ has order $k$ in $Out(\Gamma)$. In the latter
case we take the subgroup $\langle \alpha,Inn(\Gamma)\rangle$ of
$GL(d,\F)$ which provides a homomorphic image of $\Gamma\rtimes_\alpha\Z$. 
This homomorphism will not be injective because $\alpha^k\in Inn(\Gamma)$ but
this can be bypassed by replacing $\alpha$ with the element $\lambda\alpha$,
where $\lambda$ is a scalar which is transcendental over $\F$, and then
replacing $\F$
with $\F(\lambda)$. If this new homomorphic image were not injective then
there is $k>0$ with $\lambda^k\alpha^k\in Inn(\Gamma)\leq GL(d,\F)$
which is a contradiction as $\mbox{det} (\lambda^k\alpha^k)=
\lambda^{dk}\mbox{det}^k(\alpha)$ is not in $\F$.
\end{proof}

On now taking $\Gamma=F_n$, this result does not actually help when
$n\geq 3$ and indeed for free by cyclic groups where the free group
has rank at least 3, the question of linearity is open and will be
discussed in the next sections. But returning to $n=2$,
as $m_0(Aut(F_2))\leq 12$, the same bound holds for
$m_0(F_2\rtimes_\alpha\Z)$ over all automorphisms $\alpha$. Indeed the
argument above also tells us that $m_0(F_2\rtimes_\alpha\Z)\leq 6$
if $\alpha$ is orientation preserving, because $m_0(Aut^+(F_2))\leq 6$.
In this section we will give some better bounds, and in particular 
will find the exact value of $m(F_2\rtimes_\alpha\Z)$
and $m_0(F_2\rtimes_\alpha\Z)$ when $\alpha$ is orientation preserving
for all but one family of automorphisms. 

On taking a matrix 
$(I\neq)M\in SL(2,\Z)$, which we regard as the group of orientation
preserving outer automorphisms of $F_2$ denoted by $Out^+(F_2)$,
we have that $M$ is hyperbolic, elliptic or parabolic. In the first case
the automorphism $\alpha$ would be a pseudo-Anosov homeomorphism of the
once punctured torus and Thurston showed that the corresponding mapping torus
has a hyperbolic structure, so that $F_2\rtimes_\alpha\Z$ embeds in $PSL(2,\C)$
and can be lifted to $\slw$, thus giving 
$m_0(F_2\rtimes_\alpha\Z)=m(F_2\rtimes_\alpha\Z)=2$ here. However we do not
know for these groups if $m_p$ is finite when $p>0$.

If $M$ is elliptic then $\alpha$ will have finite order. 
In this case
$F_2\rtimes_\alpha\Z$ cannot embed in $\slw$ as it contains $F_2\times\Z$
which is not commutative transitive. However we will still find that
$m_0(F_2\rtimes_\alpha\Z)=m(F_2\rtimes_\alpha\Z)=2$. Indeed this holds
for finite order orientation reversing automorphisms too which we now
consider:
\begin{lem} \label{orrev}
If $\alpha$ is an orientation reversing outer automorphism considered as
a matrix $A=\sma{cc}a&b\\c&d\fma\in\glz$ with $ad-bc=-1$ then either
$A^2$ is hyperbolic or $A$ is of order 2.
\end{lem}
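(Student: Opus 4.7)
The plan is to express $\mathrm{tr}(A^2)$ as a simple polynomial in $t:=\mathrm{tr}(A)=a+d\in\Z$ via Cayley--Hamilton, and then split into two cases according to whether $t$ vanishes.

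First I would observe that $\det A=-1$ implies $\det(A^2)=1$, so $A^2\in\slz$ and the hyperbolic/parabolic/elliptic trichotomy (depending on whether $|\mathrm{tr}(A^2)|$ is $>2$, $=2$, or $<2$) applies to $A^2$. The characteristic polynomial of $A$ is $x^2-tx-1$, so Cayley--Hamilton gives the matrix identity $A^2=tA+I_2$, and taking traces yields $\mathrm{tr}(A^2)=t^2+2$.

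Now I would dispatch the two cases. If $t\neq 0$, then since $t\in\Z$ we have $t^2\geq 1$, so $\mathrm{tr}(A^2)\geq 3>2$ and $A^2$ is hyperbolic. If $t=0$, then Cayley--Hamilton collapses to $A^2=I_2$ directly; since $\det A=-1\neq 1=\det(\pm I_2)$, $A$ is not $\pm I_2$, so $A$ has order exactly $2$.

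The only mildly subtle point, and the place one has to be a little careful, is the case $t=0$: the trace formula on its own only gives $\mathrm{tr}(A^2)=2$, which a priori leaves room for $A^2$ to be a nontrivial parabolic element of $\slz$. It is the full matrix version of Cayley--Hamilton, not merely its trace, that pins $A^2$ down to $I_2$ and so to the stated dichotomy.
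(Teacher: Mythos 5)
Your proof is correct and follows essentially the same route as the paper: compute $\mathrm{tr}(A^2)=(a+d)^2+2$, conclude hyperbolicity unless $a+d=0$, and in that case observe $A^2=I_2$ (the paper does this directly, you via the matrix form of Cayley--Hamilton). Your extra remark that $\det A=-1$ rules out $A=\pm I_2$, so the order is exactly $2$, is a harmless tidying of a detail the paper leaves implicit.
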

\begin{proof} We can determine the type of $A^2$ by the trace, which
is $(a+d)^2+2\geq 2$. Thus $A^2$ is hyperbolic unless $d=-a$, in which
case $A^2=I$. 
\end{proof}

It is a well known fact that the only order 2 element in $\slz$ is
$-I$ and in $\glz$ all order 2 elements with determinant $-1$ are
conjugate either to
$\sma{rr}1&0\\0&-1\fma$ or $\sma{cc}0&1\\1&0\fma$.
This leads to a complete answer for finite order automorphisms.
\begin{prop} \label{finite}
If $[\alpha]\in Out(F_2)$ has finite
order then the group $F_2\rtimes_\alpha\Z$ embeds in $\glw$
and hence $m(F_2\rtimes_\alpha\Z)=m_0(F_2\rtimes_\alpha\Z)=2$.
\end{prop}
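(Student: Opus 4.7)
The plan is to build a faithful representation $\phi: F_2 \rtimes_\alpha \Z \injects \glw$; combined with the lower bound $m(F_2\rtimes_\alpha\Z)\geq m(F_2)=2$ this gives the result. The construction has three ingredients: realise $\alpha$ as an honest finite order automorphism, find a faithful representation $\rho$ of $F_2$ together with a matrix $M$ implementing $\alpha$ by conjugation, and then twist $M$ by a transcendental scalar to avoid unwanted kernel elements arising from the infinite order of $t$.

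First I would lift $[\alpha]\in Out(F_2)\cong\glz$ to an $\alpha\in Aut(F_2)$ of the same order $k$. The conjugacy classes of finite order elements in $\glz$ are well known (orders $2,3,4,6$ in the orientation preserving case, and only order $2$ in the orientation reversing case by Lemma \ref{orrev}), and for each class one can write down by hand an explicit representative of matching order in $Aut(F_2)$.

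Next I would produce $X,Y\in SL(2,\C)$ generating a free group, together with $M\in\glw$ satisfying $M\rho(g)M^{-1}=\rho(\alpha(g))$ for all $g\in F_2$, where $\rho(x)=X$ and $\rho(y)=Y$. Conceptually, the once punctured torus $\Sigma$ has $\pi_1(\Sigma)=F_2$ and mapping class group $Out(F_2)$, so the finite order outer automorphism $[\alpha]$ is realised by a finite order isometry of a suitably symmetric hyperbolic structure on $\Sigma$; the holonomy provides a faithful discrete $\rho_0:F_2\injects PSL(2,\R)$ and the isometry provides $M_0\in PGL(2,\R)$ which normalises $\rho_0(F_2)$ and induces $\alpha$. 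One then lifts both to $SL(2,\R)$ and $GL(2,\R)$, adjusting signs or scalar factors on the generators so that the intertwining equations hold exactly rather than merely projectively. Since there are only a handful of conjugacy classes, an alternative route is simply to exhibit $X,Y,M$ explicitly in each case.

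Finally, set $\phi|_{F_2}=\rho$ and $\phi(t)=\lambda M$ for $\lambda\in\C^*$ transcendental over the field $K$ generated by the entries of $X,Y,M$. As $\lambda$ commutes with everything, the relation $\phi(t)\phi(g)\phi(t)^{-1}=\phi(\alpha(g))$ persists and $\phi$ is a well defined homomorphism. For faithfulness, any element of $\ker\phi$ has the form $gt^n$ and yields $\rho(g)M^n=\lambda^{-n}I$; the left hand side lies in $GL(2,K)$ while $\lambda^{-n}\notin K$ for $n\neq 0$, forcing $n=0$, whereupon $\rho(g)=I$ and faithfulness of $\rho$ on $F_2$ gives $g=e$. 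The main obstacle is the middle step: producing $M$ and ensuring that the intertwining relations hold on the nose in $\glw$ rather than just projectively in $PGL(2,\C)$; a direct case by case construction using the short classification of finite order elements of $\glz$ is the most concrete way to handle this.
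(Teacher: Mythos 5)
Your overall architecture matches the paper's: reduce to finitely many conjugacy classes of finite-order elements of $Out(F_2)\cong GL(2,\Z)$, find a faithful $2$-dimensional representation $\rho$ of $F_2$ together with a matrix $M$ satisfying $M\rho(g)M^{-1}=\rho(\alpha(g))$, and send $t$ to $\lambda M$ with $\lambda$ transcendental (your determinant/field argument for faithfulness is fine, and in fact more explicit than the paper's single sentence at this point). The genuine gap is the middle step, which is the entire content of the proposition: you never actually produce $X,Y,M$ with the intertwining relations holding exactly and with $\langle X,Y\rangle$ free of rank $2$. The geometric route you sketch does not automatically deliver this: Nielsen realisation gives $\rho_0:F_2\rightarrow PSL(2,\R)$ and $M_0\in PGL(2,\R)$ normalising the image, but conjugation by a lift $\tilde M\in GL(2,\R)$ only permutes the four lifts of $\rho_0(F_2)$ to $SL(2,\R)$ and need not preserve any of them, so the relations may hold only up to sign; and your proposed fix of rescaling the generators by scalars amounts to solving $s^{a_1-1}t^{b_1}=\epsilon_1$, $s^{a_2}t^{b_2-1}=\epsilon_2$ where $(a_i,b_i)$ are the rows of the abelianised matrix $A_\alpha$, which one can always do when $\det(A_\alpha-I)\neq 0$ but not in general when $1$ is an eigenvalue of $A_\alpha$ --- exactly the situation for the orientation-reversing involutions. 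Your fallback, ``exhibit $X,Y,M$ explicitly in each case,'' is precisely the work that remains, and none of the cases is carried out.

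The paper fills exactly this gap with trace coordinates rather than explicit matrices or hyperbolic geometry: an irreducible pair $(A,B)$ in $SL(2,\C)$ is determined up to simultaneous conjugacy by the triple $(\mathrm{tr}A,\mathrm{tr}B,\mathrm{tr}(AB))$ provided $\mathrm{tr}(ABA^{-1}B^{-1})\neq 2$, so to obtain the conjugator $T$ it suffices to choose a triple fixed by the transformation that $\alpha$ induces on triples (for instance $a=b=c$ for the order-$6$ generator, $(a,a,a^2/2)$ for the order-$4$ generator, and the analogous conditions $c=ab/2$ and $a=b$ for the two reversing involutions), and to choose it inside the classical region $a,b,c>2$, $a^2+b^2+c^2-abc<0$ which guarantees that $\langle A,B\rangle$ is discrete and free of rank $2$; the reduction to these four representative classes uses that the statement for $\alpha$ passes to all powers of $\alpha$. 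To repair your write-up, either run this trace-triple argument or genuinely write down the matrices in each case (the hyperelliptic case $\alpha(x)=x^{-1},\alpha(y)=y^{-1}$ can be done with $M=XY-YX$, and the reversing involutions are easy by hand); as it stands the key existence statement is asserted, not proved.
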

\begin{proof}
If our statement holds for $\alpha$ then it holds for any power of
$\alpha$ too. Let us first take $\alpha$ to be orientation preserving.
There we have that $Out^+(F_2)=SL(2,\Z)$ is isomorphic to the amalgamated
free product $C_6*_{C_2}C_4$ and so any finite order element of $GL(2,\Z)$ is
conjugate to a power of the generator of $C_6$ or of $C_4$.

We use the well known fact that if we have two ordered pairs $(A,B)$ and
$(X,Y)$ of elements of $SL(2,\C)$ then we have $T\in SL(2,\C)$ with
$TAT^{-1}=X$ and $TBT^{-1}=Y$ if and only if the two trace triples 
$(tr(A),tr(B),tr(AB))$ and $(tr(X),tr(Y),tr(XY))$ are equal
points in $\C^3$, provided that $tr(ABA^{-1}B^{-1})\neq 2$ which would
mean that $\langle A,B\rangle$ is a soluble group anyway.

Let $F_2$ be free on $x,y$ and
$\alpha$ be the automorphism $\alpha(x)=y^{-1},\alpha(y)=xy$ which is
represented by the order 6
matrix $\sma{rr} 0&1\\-1&1\fma\in\slz$. Suppose
that we have a pair of matrices $(A,B)$ with trace triple $(a,b,c)$
which generates $F_2$. Then we see that
the trace triple of $(\alpha(A),\alpha(B))$ is $(b,c,a)$. Therefore $a=b=c$ is
necessary for $T\in\slw$ to exist, and as 
$tr(ABA^{-1}B^{-1})=a^2+b^2+c^2-abc-2$,
we have that $a\neq -1,2$ is sufficient. We can assume that $T$ has infinite
order by replacing $T$ with $\lambda T\in\glw$. Finally we need a value of
$a\in\C$ which ensures that $A,B$ really do generate a rank 2 free group.
But the trace triples of generating pairs in $SL(2,\R)$ which give
rise to discrete free groups of rank 2 are completely understood:
for instance $a,b,c>2$ and $a^2+b^2+c^2-abc<0$ is sufficient so any
$a>3$ will do. 

Similarly the generator of $C_4$ can be taken to be $\alpha(x)=y^{-1}$
and $\alpha(y)=x$. We then require our trace triple to satisfy
$(a,b,c)=(b,a,ab-c)$ which implies $b=a$ and $c=a^2/2$. Then like before,
$a>\sqrt{8}$ will provide a rank two free subgroup of $SL(2,\R)$
admitting the required automorphism under conjugation by some infinite
order element $T\in\glw$.

Finally we need to consider the orientation reversing case obtained from
the elements $\sma{rr}1&0\\0&-1\fma$ and $\sma{cc}0&1\\1&0\fma$. In the first
case $\alpha(x)=x$ and $\alpha(y)=y^{-1}$ so the trace triple equation is
$(a,b,c)=(a,b,ab-c)$, giving $c=ab/2$ and we can pick say $a=b=4$ and $c=8$
to show the existence of an embedding of the corresponding free by cyclic
group into $\glw$. We have $\alpha(x)=y$ 
and $\alpha(y)=x$ in the second 
case, whereupon we obtain the equation $(a,b,c)=(b,a,c)$ with solution
$(a,a,c)$ which will again provide a rank 2 free group if $a=4$ and $c=8$.
\end{proof}

Note: this proof ought to work in positive characteristic as well but
what seems to be lacking here is a suitably general sufficient condition
for a pair of 2 by 2 matrices to generate a rank 2 free group. However,
as $m_p(F_2\times\Z)=2$ for every characteristic $p$, we at least know that
if $\alpha$ has order $d$ in $Out(F_2)$ then $m_p(F_2\rtimes_\alpha\Z)\leq 2d$
for all $p$ by induced representations.
 
Moving on to orientation reversing elements $\alpha$
whose square is hyperbolic, we have mentioned that
$m_0(F_2\rtimes_{\alpha^2}\Z)=2$ and so $m_0(F_2\rtimes_\alpha\Z)\leq 4$
by induced actions. However it is also clear that 
$m(F_2\rtimes_\alpha\Z)\neq 2$ because there will be an element $W$
in the group $F_2=\langle A,B\rangle$ such that
$T$ conjugates $ABA^{-1}B^{-1}$
to $W^{-1}(ABA^{-1}B^{-1})^{-1}W$. Hence $(WT)^2$ commutes with $WT$
and $ABA^{-1}B^{-1}$ but they do not commute with each other, meaning that
$(WT)^2$ must be in the centre of $\langle A,B,T\rangle$
if there is a faithful 2 dimensional
representation. This would imply that conjugation by $(WT)^2$ would be the 
identity automorphism of $F_2$, but conjugation by $W$ is inner
and thus that $\alpha$ has order 2 in $Out(F_2)$. Consequently
$m(F_2\rtimes_\alpha\Z)$ and $m_0(F_2\rtimes_\alpha\Z)$
can only take the value 3 or 4. However these groups have an index 2 subgroup
which is the fundamental group of a finite volume hyperbolic 3-manifold
and in particular the centraliser of any element will be virtually $\Z$
or virtually $\Z^2$, so the techniques we have been using to determine the
minimum dimension of a faithful representation will not apply here. 

This leaves only the orientation preserving parabolic case where
$M=\pm \sma{cc}1&n\\0&1\fma$ for $n\in\N$ without loss of generality,
by replacing $M$ with $M^{-1}$. The question of linearity has received
some attention, indeed whether $F_2\rtimes_\alpha\Z$ is linear for
$M=\sma{cc}1&1\\0&1\fma$, which is $\alpha(x)=x$ and $\alpha(y)=yx$, was
Question 18.86 in the Kourovka Notebook \cite{kou} but \cite{barbry} and
\cite{rom} pointed out that linearity of the braid group $B_4$ provides
a positive answer for this group, as well as for all the groups in the
family as they are commensurable. Here we can provide a quick proof that
does not use the linearity of the braid group and which moreover
works in all characteristics.
\begin{prop} For the group $G=F_2\rtimes_\alpha\Z$ where
$\alpha(x)=x$ and $\alpha(y)=yx$, we have $m_p(G)\leq 6$ for all
characteristics $p$.
\end{prop}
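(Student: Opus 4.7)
The plan is to construct a faithful $6$-dimensional representation of $G$ as the direct sum of two $3$-dimensional representations whose kernels meet trivially.

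The first factor exploits the nilpotent quotient: since $\alpha$ fixes $x$ and sends $y$ to $yx$, modulo $[F_2,F_2]$ the element $x$ becomes central and the relation $tyt^{-1}=yx$ reads $[t,y]=x$, making $G/[F_2,F_2]$ isomorphic to the discrete Heisenberg group $\langle x,y,t\mid [x,y]=[x,t]=1,\ [t,y]=x\rangle$. This embeds into $GL(3,\F)$ as upper unitriangular matrices over any field $\F$, so composing with the quotient yields $\sigma:G\to GL(3,\F)$ with $\ker\sigma=[F_2,F_2]$.

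For the second factor $\rho:G\to GL(3,\F)$ I would extend a faithful representation of $F_2$. With $\lambda$ transcendental over the prime subfield of $\F$, take $X=\mbox{diag}(\lambda,\lambda^{-1},1)$ and $T=\mbox{diag}(1,\lambda,\lambda)$; these commute, so the relation $[x,t]=1$ is satisfied. An entry-by-entry comparison shows that $TYT^{-1}=YX$ forces exactly $y_{11}$, $y_{22}$, $y_{13}$ and $y_{32}$ of $Y$ to vanish and imposes no further constraint, leaving the other four entries as free parameters. Choosing them so that $Y$ is invertible and does not send any of the three coordinate eigenspaces of $X$ to itself makes $\langle X,Y\rangle$ non-abelian, and a ping-pong argument on the fixed points of $X$ in $\PP^2$ then shows that $\langle X,Y\rangle$ is free of rank $2$ for generic values, so that $\rho|_{F_2}$ is faithful.

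Finally, $\ker(\sigma\oplus\rho)=\ker\sigma\cap\ker\rho=[F_2,F_2]\cap\ker\rho$, and this is trivial since $\rho$ is already faithful on the larger subgroup $F_2$. Hence $\sigma\oplus\rho:G\to GL(6,\F)$ is faithful, proving $m_p(G)\leq 6$ in every characteristic. The main obstacle is the ping-pong step: if a naive constant choice of $Y$ should satisfy a hidden relation with $X$, one can introduce an additional transcendental in the free entries of $Y$ and invoke a Zariski-genericity argument, using that each non-trivial word $w(X,Y)$ imposes a non-trivial polynomial condition on those entries.
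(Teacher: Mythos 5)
Your first factor is where the argument fails, and it fails precisely in the case the proposition is really about. The quotient $G/[F_2,F_2]$ is indeed the integer Heisenberg group, but this group does not embed in $GL(3,\F)$ --- nor in $GL(n,\F)$ for any $n$ --- when $\F$ has positive characteristic. In the unitriangular picture the central element $x$, which has infinite order in $G/[F_2,F_2]$, is sent to a unipotent matrix, and every unipotent matrix over a field of characteristic $p$ has order a power of $p$; so that representation is nowhere near faithful. More generally, a finitely generated nilpotent subgroup of $GL(n,K)$ with $K$ of characteristic $p$ is abelian-by-finite (the connected component of its Zariski closure is a nilpotent algebraic group, hence a central torus times a unipotent group, and the unipotent part has bounded exponent $p^k$), whereas the Heisenberg group is not virtually abelian. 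Hence no representation of any dimension factoring through $G/[F_2,F_2]$ can do the job for $p>0$. Since the characteristic zero case was already known via the linearity of $B_4$, the positive characteristic case is the entire content of the statement, so this is a fatal gap rather than a repairable detail. (The paper avoids quotients altogether: it embeds the index $2$ subgroup $\langle y^2,t,y^{-1}ty\rangle$ into the path RAAG $\mathcal R$, which Theorem \ref{main} puts inside $GL(3,\F)$ in every characteristic, and then induces up to get dimension $6$.)

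The second factor also rests on an unproved claim. Your computation of the constraint $TYT^{-1}=YX$ is correct (it forces $y_{11}=y_{22}=y_{13}=y_{32}=0$, leaving five, not four, free entries), but the assertion that some $Y$ of this constrained shape generates a rank $2$ free group with $X$ is exactly the nontrivial point, and neither sketch closes it: ping-pong at the fixed points of $X$ in $\PP^2$ needs matching dynamics for $Y$, and passing to high powers only gives freeness of $\langle X^N,Y^N\rangle$; while the Zariski-genericity argument needs, for each nontrivial word $w$, a witness inside this constrained family with $w(X,Y)\neq I$ --- which is essentially the statement to be proved, since a constrained family can perfectly well satisfy identical relations. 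Note also that if this step did work, the Heisenberg factor would be superfluous: since $\ker\rho\cap F_2=1$, adding the one-dimensional character sending $t$ to an infinite order scalar $\mu$ and $F_2$ to $1$ already gives $\ker(\chi\oplus\rho)=F_2\cap\ker\rho=1$, i.e.\ a faithful $4$-dimensional representation in every characteristic. That this would improve on the paper's bound is a further sign that all the weight of your argument sits on the freeness claim you have not established.
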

\begin{proof} Let the stable letter of $G$ be $t$, so
that $G=\langle x,y,t\rangle=\langle y,t\rangle$ because $y^{-1}tyt^{-1}=x$.
It was shown in \cite{nbws} that the index 2 subgroup 
$H=\langle y^2,t,y^{-1}ty\rangle$ 
of $G$ embeds into the RAAG $\mathcal R$
in Theorem \ref{main},
indeed using our notation there for the generators of $\mathcal R$ we
can take $y^2=GF^{-1},t=Q$ and $y^{-1}ty=P$. As $H$ fails the
commutative transitive property and
we showed that
this RAAG embeds in $GL(3,\F)$ for every characteristic $p$, we conclude
that $m_p(H)=3$ for all $p$ and thus that $3\leq m_p(G)\leq 6$.
\end{proof} 

In conclusion we have a reasonable understanding of the minimum dimension
in characteristic zero of faithful linear representations for
groups of the form $F_2\rtimes_\alpha\Z$ 
but less so for positive characteristic.
Moreover we have not ruled out that $m_p(F_2\rtimes_\alpha\Z)$ is at most
3 for every $p$ and every $\alpha$; indeed showing that 
$m_0(F_2\rtimes_\alpha\Z)>3$ for some orientation preserving $\alpha$ would
imply that the Burau representation for $B_4$ is not faithful, which seems
to be the outstanding open question in this area. We will now see that things
are very different when the rank of the free group is greater than 2.

\section[Not linear in positive characteristic]{The Gersten free by 
cyclic group is not linear in positive characteristic}

We have already mentioned that the linearity question is open for free
by cyclic groups $F_n\rtimes_\alpha\Z$ where $n\geq 3$, although the
Agol-Wise results again have been used to show this holds in most cases.
Indeed \cite{hgws2} proved that any word hyperbolic free by cyclic group
acts properly and cocompactly on a CAT(0) cube complex, thus these groups
will be virtually special and so have faithful linear representations over
$\Z$. Therefore, as not being word hyperbolic is equivalent to containing
$\Z\times\Z$ for free by cyclic groups, our question should really be
stated as:
\begin{qu}
If $F_n\rtimes_\alpha\Z$ is a free by cyclic group for $n\geq 3$ which
contains $\Z\times\Z$ then is it linear over some field?
\end{qu} 

In Gersten's paper \cite{ger} the free by cyclic group 
$F_3\rtimes_\alpha\Z=\langle a,b,c,t\rangle$
with $tat^{-1}=a,tbt^{-1}=ba,tct^{-1}=ca^2$ is introduced and
shown to have very strange properties. In particular an argument
using translation length proves that it cannot act properly and
cocompactly by isometries on a CAT(0) space. Now given the open
question of whether all free by cyclic groups $F_n\rtimes_\alpha\Z$
for $n\geq 3$ are linear, Gersten's group $G$ would seem like an
important test case. In this section we 
prove that $G$ is not linear
over any field of positive characteristic. This will be a consequence
of showing that the most straightforward linear representations of this
group, namely ones where 
the elements $t,a$ are both diagonalisable and thus simultaneously
diagonalisable as $ta=at$, are never faithful over any field.

\begin{thm} \label{pos}
Suppose we have commuting elements $T,A\in GL(d,\F)$ for $d$ any dimension
and $\F$ any field, such that the matrix
$T$ is conjugate to $TA$ and also to $TA^2$. Then the eigenvalues
of $A$ must be roots of unity.
\end{thm}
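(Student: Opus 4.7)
The plan is to pass to the algebraic closure, simultaneously triangularise the commuting pair $T,A$, and reduce the eigenvalue question to a purely integer-combinatorial statement. First I would extend scalars and assume $\F$ is algebraically closed, then pick a basis in which $T$ and $A$ are both upper triangular, with diagonals $(\mu_1,\ldots,\mu_d)$ and $(\lambda_1,\ldots,\lambda_d)$ in $\F^*$. Then $TA$ and $TA^2$ are also upper triangular with diagonals $(\mu_i\lambda_i)$ and $(\mu_i\lambda_i^2)$, and the conjugacies $T\sim TA$, $T\sim TA^2$ force a common characteristic polynomial. Hence the following multiset equalities hold in $\F^*$:
\[\{\mu_i\}_{i=1}^d=\{\mu_i\lambda_i\}_{i=1}^d=\{\mu_i\lambda_i^2\}_{i=1}^d.\]

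Next let $H\leq\F^*$ be the (finitely generated abelian) subgroup generated by all these eigenvalues, so $H\cong H_{\mathrm{tors}}\oplus\Z^r$ with $H_{\mathrm{tors}}$ a group of roots of unity. To show every $\lambda_i$ lies in $H_{\mathrm{tors}}$ it suffices to show that $\phi(\lambda_i)=0$ for every group homomorphism $\phi\colon H\to\Z$, since such homomorphisms separate the points of the free part $\Z^r$. Fix such a $\phi$ and set $m_i=\phi(\mu_i)$, $\ell_i=\phi(\lambda_i)\in\Z$. Applying $\phi$ to the multiset equalities above, using that $\phi$ is additive, gives the three-way identity of multisets in $\Z$:
\[\{m_i\}_{i=1}^d=\{m_i+\ell_i\}_{i=1}^d=\{m_i+2\ell_i\}_{i=1}^d.\]

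The key step is then to compare second power sums on each side. Squaring and summing in the first equality gives $2\sum_i m_i\ell_i+\sum_i\ell_i^2=0$, and doing the same in the second yields $4\sum_i m_i\ell_i+4\sum_i\ell_i^2=0$. This is a nondegenerate $2\times 2$ linear system in the two unknowns $\sum m_i\ell_i$ and $\sum\ell_i^2$, forcing $\sum_i\ell_i^2=0$. Since the $\ell_i$ are integers, every $\ell_i=0$; letting $\phi$ vary over all homomorphisms $H\to\Z$ then shows each $\lambda_i\in H_{\mathrm{tors}}$, i.e.\ is a root of unity.

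The main obstacle I expect is not algebraic but one of picking the right invariant. A single conjugacy $T\sim TA$ gives only $\sum\ell_i=0$ and the one relation $2\sum m_i\ell_i+\sum\ell_i^2=0$, which is insufficient. A direct attack via the induced permutation $\sigma$ with $m_i+\ell_i=m_{\sigma(i)}$ quickly becomes a bookkeeping mess of orbit products. The observation that makes everything collapse is that the two hypotheses $T\sim TA$ and $T\sim TA^2$ produce two independent linear relations in precisely $\sum m_i\ell_i$ and $\sum\ell_i^2$, and it is exactly the $\lambda\mapsto\lambda^2$ shift that decouples these quantities and forces $\sum\ell_i^2=0$.
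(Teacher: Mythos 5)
Your argument is correct, and it reaches the conclusion by a genuinely different route from the paper. Both proofs start the same way: pass to the algebraic closure, use commutativity to put $T$ and $A$ in simultaneous (upper triangular) form, and observe that the conjugacies $T\sim TA$, $T\sim TA^2$ force the multiset of diagonal entries of $T$ to coincide with those of $TA$ and $TA^2$. From there the paper works with the resulting system of equations in an arbitrary abelian group: it decomposes the relevant permutations into cycles, proves constancy of solutions first over $\R$ by a maximum-modulus argument, transfers this to arbitrary torsion free abelian groups by passing through $\Z^m$ coordinate-wise, and finally deals with roots of unity separately by replacing $T,A$ with $T^e,A^e$ so that the eigenvalues generate a torsion free group. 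You short-circuit all of this: composing with homomorphisms $\phi\colon H\to\Z$ (which automatically kill the torsion, so no separate treatment of roots of unity is needed) turns the two multiset equalities into equalities of multisets of integers, and comparing second power sums gives the two relations $2\sum m_i\ell_i+\sum\ell_i^2=0$ and $4\sum m_i\ell_i+4\sum\ell_i^2=0$, whose nondegeneracy forces $\sum\ell_i^2=0$ and hence $\ell_i=0$. Your observation that it is precisely the pair of hypotheses ($TA$ and $TA^2$) that decouples $\sum m_i\ell_i$ from $\sum\ell_i^2$ is exactly the role these two conjugacies play in the paper's orbit argument, but your moment computation avoids the cycle bookkeeping, the real-analytic maximum argument, and the torsion detour entirely; it is shorter and arguably more transparent, while the paper's method yields along the way the slightly more explicit statement that in the diagonalisable case some fixed power $A^e$ is actually the identity.
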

\begin{proof}
We replace $\F$ by its algebraic closure, which we will also call
$\F$, and first
suppose that both $A$ and $T$ are diagonalisable, so that we can choose
a basis $e_1,\ldots,e_d$ in which both
\[T=\mbox{diag}(t_1,\ldots ,t_d)\mbox{ and }
A=\mbox{diag}(a_1,\ldots ,a_d)\]
are simultaneously diagonal. As $TA$ and $TA^2$ are also then diagonal,
each has entries which are a permutation of the diagonal entries of
$T$. Although these permutations, which we will name $\pi$ and
$\sigma$ respectively, will not in general be well defined because of
repeated eigenvalues, we choose appropriate $\pi$ and $\sigma$ defined
in some suitable way. We now permute our basis so that $\pi$ is a
product of disjoint consecutive cycles, that is
\[\pi=(1 2\ldots k_1)(k_1+1 \,k_1+2 \ldots k_2)\ldots (k_{r-1}+1 \ldots k_r)\]
for $k_r=d$.

First suppose that the number of cycles $r$ in $\pi$ is 1. Then we have
\[T=\mbox{diag}(t_1,\ldots ,t_d)\mbox{ and }
TA=\mbox{diag}(t_2,\ldots ,t_d,t_1)\]
so that
\begin{eqnarray*}A=\mbox{diag}
(t_2/t_1,t_3/t_2,\ldots ,t_1/t_d)\mbox{ and thus }
TA^2&=&\mbox{diag}(t_2^2/t_1,t_3^2/t_2,\ldots ,t_1^2/t_d)\\
&=&\mbox{diag}(t_{\sigma(1)},t_{\sigma(2)},\ldots ,t_{\sigma(d)})
\end{eqnarray*}
for the permutation $\sigma$ above.

Now all $t_i$ are in the abelian group $\F^*$ written multiplicatively,
but on changing to additive notation we can regard the two expressions
for $TA^2$ as providing a system of linear equations. Thus let us
work in an arbitrary abelian group $\A$ written additively, so
that we are replacing $(\F^*,\times)$ above with $(\A,+)$. We are
hence trying to solve the homogeneous system of equations
\[2x_2=x_1+x_{\sigma(1)},2x_3=x_2+x_{\sigma(2)},\ldots ,2x_1=x_d
+x_{\sigma(d)}\]
in the unknown variables $x_1,\ldots ,x_d\in \A$. We note that there
exist solutions where $x_1=\ldots =x_d$ (which would result in $A$
being the identity) and we are trying to rule out other solutions.

Let us first set $\A=\R$, so that we can use the usual order on $\R$ as
well as linear algebra. Given a non zero solution
$(r_1,\ldots ,r_d)\in\R^d$, let $M$ be $\mbox{max}_{1\leq i\leq d}|r_i|$
and let $|r_k|$ attain $M$, so that $r_k=M>0$ without loss of generality
by multiplying the solution by $-1$ if necessary. Now one equation is
$2r_k=r_{k-1}+r_{\sigma(k-1)}$ (where all subscripts are taken modulo
$d$) so $2M=r_{k-1}+r_{\sigma(k-1)}\leq |r_{k-1}|+|r_{\sigma(k-1)}|\leq 2M$,
thus for equality we need $r_{k-1}$ and $r_{\sigma(k-1)}$ both to have
modulus $M$ and be positive. Thus now we can replace the equation with
left hand side $2r_k$ by the equation having left hand side $2r_{k-1}$
and continue until we have the constant solution.

However this assumed that the initial permutation $\pi$ was just the
cycle $(1 2 \ldots d)$. Let us consider the general case
\[\pi=(1 2\ldots k_1)(k_1+1 \,k_1+2 \ldots k_2)\ldots (k_{r-1}+1 \ldots k_r)\]
so that the two expressions for $TA^2$ now read
\begin{eqnarray*}
&&\mbox{diag}(t_2^2/t_1,\ldots,t_1^2/t_{k_1},t_{k_1+2}^2/t_{k_1+1},
\ldots ,t_{k_1+1}^2/t_{k_2},\ldots ,t_{k_{r-1}+1}^2/t_d)\\
&=&\mbox{diag}(t_{\sigma(1)},\ldots ,t_{\sigma(k_1)},t_{\sigma(k_1+1)},
\ldots ,t_{\sigma(k_2)},\ldots,t_{\sigma(d)}).
\end{eqnarray*}
Then if $S$ is the subgroup of Sym($d$) generated by $\pi,\sigma$, we
have that the orbits of $S$ are unions of the disjoint cycles for
$\pi$ as above. However if $S$ is not transitive then it should be
clear that we have solutions $x_1,\ldots ,x_d\in\A$ which are
constant on orbits of $S$ but which are not constant overall, because
the equations in separate orbits involve disjoint sets of variables
(however these solutions still give rise to the matrix $A$ being
the identity). Therefore let us consider the orbit $O$ under $S$ of
some point $x\in\{1,\ldots ,d\}$ which will be a union of the cycles
for $\pi$. Let us assume without loss of generality that $j\in O$ is such
that $x_j=M>0$ maximises $|x_i|$ over all $i\in O$. Then $j$ sits
in some cycle $(k_{l-1}+1\, \ldots \,k_l)$ and as before on considering
the equation $2r_j=r_{j-1}+r_{\sigma(j-1)}$, then $2r_{j-1}=r_{j-2}+
r_{\sigma(j-2)}$ and so on, where our subscripts are taken from numbers
in this cycle and where by subtracting 1 we mean shifting backwards
round the cycle. This implies not only that
\[r_{k_{l-1}+1}=r_{k_{l-1}+2}=\ldots =r_{k_l}=M\]
but also that any subscript $s$ which is an image under $\sigma$ of 
a point in this cycle will satisfy $r_S=M$ too. Thus we now move to another 
cycle until we see that our solution is constant on the whole of $O$.

We now deduce the same conclusion for solutions of these equations
over any torsion free abelian group $\A$. If we have a solution
$(x_1,\ldots ,x_d)\in\A^d$ then we replace $\A$ with the finitely
generated subgroup $\langle x_1,\ldots ,x_d\rangle=\A_0$ and work
in $\A_0$ which, being a finitely generated torsion free abelian
group, is just a copy of $\Z^m$ for some $m$. If we now take a
particular $\Z$-basis for $\A_0$, we can express $x_1,\ldots ,x_d$ as
elements of $\Z^m$ and then our $d$ equations become $m$ lots of
$d$ equations, with one set of $d$ equations for each coefficient
of $\Z^m$. But as each system of equations over $\Z$ can be thought of as
also over $\Q$ and indeed over $\R$, our above argument tells us that
our solution must be constant over each orbit coordinate-wise, so
indeed our elements $x_1,\ldots ,x_d$ are equal amongst subscripts in
the same orbit.

However this assumed that $\A$ is torsion free, whereas in the
multiplicative group $\F^*$ of a field we will have roots of
unity. To deal with this case, note that if the element $X\in G$
is such that $XTX^{-1}=TA$ then $XT^nX^{-1}=T^nA^n$ for any $n\in\Z$
and similarly we have $YT^nY^{-1}=T^nA^{2n}$ if $YTY^{-1}=TA^2$.
Thus on initially being given our diagonal elements $t_1,\ldots ,t_d\in\F^*$
of $T$, where now we finally return to multiplicative notation, we have
that $\langle t_1,\ldots ,t_d\rangle$, considered as an abstract
finitely generated abelian group, must be isomorphic to $\Z^r\oplus R$
for some $r\leq d$ and $R$ a finite subgroup consisting of the torsion
elements. Hence there exists an exponent $e>0$ such that 
$t_1^e,\ldots ,t_d^e$ all lie in the $\Z^r$ part and so these elements
generate a torsion free abelian subgroup. Now we can can run through
the whole proof above with $T$ and $A$ replaced by $T^e$ and $A^e$
(but the conjugating elements $X$ and $Y$ remain the same), whereupon
we conclude that $A^e$ must be the identity and so the eigenvalues
of $A$ are all roots of unity.

If our elements are not diagonalisable then, as they commute, we can
still find a basis in which both
$T$ and $A$ are upper triangular. We can then
work through the above proof using the diagonal elements of $T$ and of $A$,
which will multiply in the same way to give the diagonal entries
of $TA$ and of $TA^2$, 
whereupon we will conclude that some power $A^e$ of $A$ is upper triangular
with all ones down the diagonal, thus again the eigenvalues of $A$ are all
roots of unity.
\end{proof}
\begin{co}
If a group $G$ has commuting elements $T,A$ with $A$
of infinite order
such that $T,TA,TA^2$ are all conjugate in $G$ then it is not
linear over any field $\F$ of positive characteristic.
\end{co}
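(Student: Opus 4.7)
The plan is to derive this corollary as a direct consequence of Theorem \ref{pos} combined with the standard fact that in positive characteristic, an element of $GL(d,\F)$ all of whose eigenvalues are roots of unity must have finite order. Suppose for contradiction that we have a faithful representation $\rho : G \hookrightarrow GL(d,\F)$ for some field $\F$ of characteristic $p>0$. Since $T$, $TA$ and $TA^2$ are conjugate in $G$, their images are conjugate in $GL(d,\F)$, while $\rho(T)$ and $\rho(A)$ commute because $T$ and $A$ do. Thus Theorem \ref{pos} applies to $\rho(T)$ and $\rho(A)$, and we conclude that every eigenvalue of $\rho(A)$ is a root of unity.

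Now I would exploit the fact that in characteristic $p$ the roots of unity in $\overline{\F}$ all lie in $\overline{\F_p}$, and form the torsion subgroup of $\overline{\F_p}^*$ which is a union of the finite cyclic groups $\F_{p^n}^*$. Since $\rho(A)$ has only finitely many eigenvalues, each a root of unity in this sense, there exists a positive integer $N$ (coprime to $p$) such that every eigenvalue of $\rho(A)^N = \rho(A^N)$ equals $1$. Hence $\rho(A^N)$ is unipotent. Over a field of characteristic $p$, any unipotent element of $GL(d,\F)$ has order a power of $p$ (for instance because $(U-I)^d = 0$ implies $U^{p^k} = I$ once $p^k \geq d$). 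Therefore $\rho(A^N)$, and hence $\rho(A)$, has finite order.

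By faithfulness of $\rho$, this forces $A$ itself to have finite order in $G$, contradicting the hypothesis that $A$ has infinite order. The only real step here is the application of Theorem \ref{pos}; the passage from ``eigenvalues are roots of unity'' to ``finite order'' is standard and characteristic-$p$ specific, which is exactly where the hypothesis on characteristic enters. Note that in characteristic zero this final step fails spectacularly: an element with eigenvalues that are roots of unity need not be of finite order (it can have a non-trivial unipotent part), which is consistent with the fact that the corollary is false in characteristic zero.
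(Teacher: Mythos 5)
Your proof is correct and follows essentially the same route as the paper: apply Theorem \ref{pos} to conclude the eigenvalues of the image of $A$ are roots of unity, pass to a power that is unipotent, and use the fact that unipotent elements in characteristic $p$ have finite ($p$-power) order to contradict the infinite order of $A$. You merely spell out the standard characteristic-$p$ steps that the paper compresses into one sentence.
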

\begin{proof}
We can assume $\F$ is algebraically closed, whereupon Theorem \ref{pos}
tells us that a power $A^e$ of $A$ is unipotent, which over
positive characteristic implies that $A$ has finite order.
\end{proof}
\begin{co} \label{nops}
Gersten's group $G$ is not linear in any positive characteristic.
\end{co}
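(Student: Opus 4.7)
The plan is to apply the immediately preceding corollary (the one stating that a group containing commuting elements $T,A$ with $A$ of infinite order and $T,TA,TA^2$ all conjugate cannot be linear in positive characteristic). The only task is therefore to locate such a pair $T,A$ inside Gersten's group $G=\langle a,b,c,t\mid tat^{-1}=a,\ tbt^{-1}=ba,\ tct^{-1}=ca^2\rangle$.

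The natural candidates are $T=t$ and $A=a$. Commutativity $TA=AT$ is the very first defining relation. That $a$ has infinite order in $G$ is immediate from the semidirect product structure, since $a$ is a free basis element of the normal $F_3$ subgroup.

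The key observation is that the other two relations can be rewritten, respectively, as $b^{-1}tb=ta$ and $c^{-1}tc=ta^2$. Indeed, $tbt^{-1}=ba$ gives $tb=bat$ and hence $b^{-1}tb=at=ta$, using the commutativity of $t$ and $a$; similarly $tct^{-1}=ca^2$ yields $c^{-1}tc=a^2t=ta^2$. Thus $t$, $ta$ and $ta^2$ are all conjugate to each other in $G$.

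With the hypotheses of the preceding corollary fully verified by this pair, the conclusion follows at once: $G$ admits no faithful linear representation over any field of positive characteristic, i.e.\ $m_p(G)=\infty$ for every prime $p$. The only step that required anything beyond bookkeeping is the choice of conjugators $b$ and $c$ witnessing the conjugacy of $t, ta, ta^2$, and this is transparent from the three defining relations, so there is no real obstacle.
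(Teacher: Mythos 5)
Your proposal is correct and is essentially the paper's own argument: the paper likewise notes $b^{-1}tb=at=ta$ and $c^{-1}tc=a^2t=ta^2$ with $a$ of infinite order, and then invokes the preceding corollary with $T=t$, $A=a$. Your write-up just spells out the same one-line verification in more detail.
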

\begin{proof}
We have $b^{-1}tb=at=ta$ and $c^{-1}tc=a^2t=ta^2$ with $a$ of infinite
order.
\end{proof}

Gersten constructed this example specifically
because it embeds in $Aut(F_3)$ as in Lemma \ref{inout}, and thus
in $Aut(F_m)$ for $m\geq 3$ and $Out(F_n)$ for $n\geq 4$. Consequently
we have a proof that none of these groups are linear over a field of
positive characteristic without using the theory of algebraic groups
in \cite{fp}. However as long as the question of linearity of Gersten's
group over $\C$ is still open, we will not know if this approach can
be made to work in characteristic zero.

\section[No low dimensional faithful representation]{No low dimensional 
faithful representation of Gersten's group}

Although we are not able to resolve linearity of Gersten's group over
$\C$, we show here that it has no low dimensional faithful linear
representation over $\C$ and hence over any field. This establishes for
free by cyclic groups the equivalent result in \cite{me3} for
3-manifold groups. Indeed our method of proof will be similar in that
we work through and eliminate the various possible Jordan normal forms, thus
we will quote straight from \cite{me3} when needed. However here the
actual arguments for eliminating the various Jordan normal forms will
generally be shorter and, unlike the earlier paper, no case will require
any involved calculations.    

\begin{thm} There is no faithful representation over $\C$ of Gersten's
group $G$ in dimension 4, thus by this and Corollary \ref{nops} we
have $m(G)>4$.
\end{thm}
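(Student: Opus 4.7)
The plan is to assume for contradiction a faithful $\rho\colon G\to GL(4,\C)$ and to eliminate in turn the possible Jordan normal forms of $A:=\rho(a)$, in the spirit of \cite{me3}. Write also $T=\rho(t)$, $B=\rho(b)$ and $C=\rho(c)$; the defining relations of $G$ give $AT=TA$, $BTB^{-1}=TA$ and $CTC^{-1}=TA^2$, so $T$, $TA$ and $TA^2$ are pairwise conjugate in $GL(4,\C)$ and all commute with $A$. The eigenvalue argument used in the proof of Theorem \ref{pos} depends only on algebraic closure together with three such commuting conjugate elements, so it applies verbatim and forces every eigenvalue of $A$ to be a root of unity. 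Combined with $a$ having infinite order and $\rho$ being faithful, the Jordan decomposition $A=SU$ must then have a non-trivial unipotent factor $U$.

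The case split is by the partition of $4$ recording the Jordan block sizes of $A$ at each eigenvalue; the configurations containing at least one block of size $\geq 2$ are the partitions $(4)$, $(3,1)$, $(2,2)$ and $(2,1,1)$, together with their various distributions across two, three or four distinct roots of unity. In each configuration I would place $A$ in Jordan form, describe $C_{GL(4,\C)}(A)$ explicitly, and then pin $T\in C_{GL(4,\C)}(A)$ down using the equalities of multisets of diagonal entries of $T$, $TA$ and $TA^2$ obtained by simultaneously triangularising $T$ and $A$; this is the same starting observation that drives the proof of Theorem \ref{pos} and leaves $T$ determined up to finitely many continuous parameters.

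The final step in each case is to locate $B$ and $C$ realising the stipulated conjugacies and then to show that they force a relation satisfied in the image but not in $G$. Expected sources of contradiction include: $B$ or $C$ is forced to commute with a word in $A,T$ which is non-trivial in $G$; the subgroup $\langle\rho(a),\rho(b),\rho(c)\rangle$ lies inside a soluble subgroup of $GL(4,\C)$, contradicting the fact that $a,b,c$ freely generate $F_3$; or $B$ and $C$ are obliged to satisfy a braid-like identity absent from $G$. Where the local shape of the centraliser matches a case already treated in \cite{me3}, that argument can be quoted directly, which should keep the write-up much shorter than in the 3-manifold paper. I expect the hardest configuration to be something like $(2,1,1)$ split over two distinct roots of unity: there the centraliser of $A$ decomposes into independent factors and $B,C$ have considerable freedom in each factor, so ruling out faithfulness will require using the exact relation $CTC^{-1}=TA^2$ (rather than merely the conjugacy class of $T$) to couple the factors and force the required contradiction. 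Note that because a faithful representation in dimension $\leq 3$ could be padded with a trivial summand to yield one in dimension $4$, the same analysis will also rule out faithful representations in all dimensions up to $4$, giving the stated $m(G)>4$ once combined with Corollary \ref{nops}.
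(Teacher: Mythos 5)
Your opening reductions are sound: the relations do give $BTB^{-1}=TA$, $CTC^{-1}=TA^{2}$ with $AT=TA$, Theorem \ref{pos} applies to force the eigenvalues of $A$ to be roots of unity, faithfulness plus the infinite order of $a$ then forces a non-trivial unipotent part of $A$, and the padding remark correctly reduces $m(G)>4$ to the 4-dimensional case together with Corollary \ref{nops}. The genuine gap is that the substance of the theorem --- the elimination of the possible normal forms --- is never carried out. Phrases such as ``I would place $A$ in Jordan form'', ``expected sources of contradiction include'' and ``I expect the hardest configuration to be'' signal that no single case is actually closed; what you have is a strategy, not a proof.

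Moreover, the organising principle you chose discards the lever that makes the case analysis tractable. You propose to case on the Jordan type of $A$, whereas the paper cases on the Jordan type of $T$, and the decisive restriction there comes from a fact you never use: the centraliser of $t$ in $G$ contains the rank two free group $\langle a,\,aba^{-1}b^{-1}\rangle$ (one checks $t(aba^{-1}b^{-1})t^{-1}=aba^{-1}b^{-1}$), so by faithfulness at least one generalised-eigenspace block of $T$ must have centraliser containing a non-abelian free group, and Proposition \ref{cent} (quoted from \cite{me3}) then leaves only a very short list of shapes for $T$. Without this, your claim that $T$ is ``determined up to finitely many continuous parameters'' inside $C_{GL(4,\C)}(A)$ is unjustified --- for instance if $A$ has type $(2,1,1)$ at a single eigenvalue, its centraliser is large and admits many inequivalent candidates for $T$. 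Note also that in several of the surviving configurations $T$ genuinely is conjugate to both $TA$ and $TA^{2}$ (the paper exhibits such pairs $T,A$ explicitly), so the contradiction cannot come from non-existence of conjugators; it comes from showing that any conjugating matrices $B$, $C$ are forced into block upper triangular form, whence $\langle A,B\rangle$ or $\langle A,C\rangle$ cannot be a rank two free group, contradicting faithfulness on $\langle a,b\rangle$ or $\langle a,c\rangle$. Your list of possible contradictions gestures at this, but until each configuration is worked through with the explicit rank and square conditions (as in the paper's Cases 1 and 2), the theorem is not established.
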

\begin{proof}
From Theorem \ref{pos} we can assume that the eigenvalues of $A$ are all
roots of unity, or even all 1
by replacing $A$ with $A^n$ and $T$ with $T^n$ for the appropriate
integer $n$. Now $A$ commuting with
$T$ implies that $A$ sends each generalised eigenspace of $T$ to itself
(the same is true the other way round but now the generalised 1-eigenspace
for $A$ is all of $\C^4$). Consequently we can split our matrix into blocks,
in each of which $T$ has the same eigenvalue and then conjugate our
representation within each block such that $T$ is in Jordan normal form
or a canonical form very close to this. As the centraliser of $T$ 
in the Gersten group contains
the rank 2 free group $\langle A,ABA^{-1}B^{-1}\rangle$, we must have that
at least one block in the matrix representation for $T$ has a centraliser
big enough to contain non abelian free groups. We then work out the possible
forms for $T$ and then for $A$ by looking at the resulting centraliser
and we see when $T$ can be conjugate both to $TA$ and to $TA^2$. We first
list the Jordan blocks of size at most 4 by 4 with a big enough centraliser,
which is Proposition 5.3 in \cite{me3}.

\begin{prop} \label{cent} 
Given a Jordan block of size at most 4 by 4, the following
are the ones with a big centraliser 
(meaning they contain a non abelian free group):
\[\lambda I_2,\,\lambda I_3,\,\lambda I_4,\,
\sma{cccc}\lambda&1&0&0\\0&\lambda&0&0\\0&0&\lambda&0\\
0&0&0&\lambda\fma,
\sma{cccc}\lambda&1&0&0\\0&\lambda&0&0\\0&0&\lambda&1\\
0&0&0&\lambda\fma.
\]
\end{prop}

Starting with the smallest such block which is $\lambda I_2$, the other
block(s) appearing in $T$ need not have big centralisers so the
possibilities are $\mu I_1$ and $\nu I_1$, or $\mu I_2$ or
$\sma{cc}\mu&1\\0&\mu\fma$. Now we can conjugate $A$ within the
$\lambda I_2$ block without changing $T$, so we can assume that
this part of $A$ equals
$\sma{cc}1&1\\0&1\fma$ or $I_2$. But the first option means that
$TA$ does not
have $\lambda$ as an eigenvalue of geometric multiplicity 2, unlike $T$.
As for $I_2$, either $A$ would be diagonal and so equal to the
identity if $T$ has two 1 by 1 blocks, or we can apply the above to
$\mu$ if $T$ has a $\mu I_2$ block, or we have 
\[T=\sma{cccc}\lambda&0&0&0\\0&\lambda&0&0\\0&0&\mu&\epsilon\\0&0&0&\mu\fma
\qquad\mbox{and}\qquad
A=\sma{cccc}1&0&0&0\\0&1&0&0\\0&0&1&1\\0&0&0&1\fma\]
for $\epsilon=0$ or 1,
where $T$ is conjugate to $TA$ and $TA^2$ for $\mu\neq -1,-1/2$
but it is easily checked that such a conjugating matrix $B$ or $C$ would also
split into the same 2 by 2 diagonal blocks with the bottom one upper
triangular, thus $\langle A,B\rangle$ nor $\langle A,C\rangle$ could be
a rank 2 free group.

We next assume that $A$ and $T$ split into generalised eigenspaces
for $T$ which are a 3 by 3 block and a 1 by 1 block, thus these have
to equal $\lambda I_3$ and $\mu I_1$ because of the restriction of some 
block having a big centraliser. Consequently $A$ has the same block structure
but then $T$ is not
conjugate to $TA$ unless $A=I_4$.

Finally we assume that $T$ is one of the two 4 by 4 matrices
given in Proposition \ref{cent}, though we conjugate each one slightly
so that the centraliser takes on a neater form. This gives us\\
{\bf Case 1}:\\
\[T=\sma{cccc}\lambda&0&0&1\\0&\lambda&0&0\\0&0&\lambda&0\\
0&0&0&\lambda\fma,\mbox{ which has centraliser }
\sma{cccc}a&?&?&?\\0&?&?&?\\0&?&?&?\\0&0&0&a\fma\\
\]
where ? denotes any complex number, not necessarily the same number on
each appearance, whereas repeated letters are equal to each other.\\
{\bf Case 2}:\\
\[T=\sma{cccc}\lambda&0&1&0\\0&\lambda&0&1\\0&0&\lambda&0\\
0&0&0&\lambda\fma,\mbox{ which has centraliser }
\sma{cccc}a&b&?&?\\c&d&?&?\\0&0&a&b\\0&0&c&d\fma.\\
\]  
Moreover we can conjugate just in the middle 2 by 2 block in the first case,
and by the same upper left and lower right 2 by 2 blocks in the second case
which will leave $T$ unchanged but allow us to assume that $A$ is also upper
triangular. We now investigate each case.\\
{\bf Case 1}:\\
Setting 
\[A=\sma{cccc}1&a&b&c\\0&1&d&e\\0&0&1&f\\0&0&0&1\fma
\mbox{ gives }
TA=\sma{cccc}\lambda&\lambda a&\lambda b&\lambda c+1\\
0&\lambda&\lambda d&\lambda e\\0&0&\lambda&\lambda f\\
0&0&0&\lambda\fma
\]
and requiring $T$ and $TA$ to be conjugate forces $TA-\lambda I_4$
to have rank 1, thus if $f\neq 0$ then $a=b=d=0$. On taking
an arbitrary matrix $B=(b_{ij})$ and equating $BTA=TB$, we find
$b_{41}=b_{42}=b_{43}=0$ which means we can reduce to the 3 by 3
case by taking the top left 3 by 3 block in $T,A,B$, whereupon
we now have $A=I_3$ in this block. Consequently there is no
$B$ such that $A$ and $B$ generate a non abelian free group, for
instance $BAB^{-1}$ and $A$ are now both upper triangular 4 by 4
matrices.

We can repeat this argument if $a\neq 0$, so now say $a=f=0$. Here
we can work in 2 by 2 blocks to find out the conditions saying that
$TA-\lambda I_4$ and $TA^2-\lambda I_4$ have rank 1, which become
$be\lambda=d(c\lambda +1)$ and $2be\lambda=d(2c\lambda+1)$ respectively.
Together these imply that $d=0$ and then $b$ or $e=0$ which reduces
to the case above with the 3 by 3 block.\\
{\bf Case 2}:\\
Now $T-\lambda I_4$ has rank 2 and we will set
\[A=\sma{cccc}1&a&\alpha&\beta\\0&1&\gamma&\delta\\0&0&1&a\\0&0&0&1\fma
\mbox{ thus }
TA=\sma{cccc}\lambda&\lambda a&\lambda \alpha+1&\lambda\beta+a\\
0&\lambda&\lambda\gamma&\lambda\delta+1\\0&0&\lambda&\lambda a\\
0&0&0&\lambda\fma.
\]
We first consider when $a\neq 0$ whereupon the rank 2 condition
for $TA-\lambda I_4$ forces $\gamma=0$. However we also have
$(T-\lambda I_4)^2=0$ so we require $(TA-\lambda I_4)^2=0$ too,
which then implies that $\lambda a(\lambda\alpha+\lambda\delta+2)=0$,
or equivalently the (1,3) entry and the (2,4) entry
of $TA$ sum to zero, as $\lambda,a\neq 0$.

But if there also were a matrix $C$ with
$CTA^2=TC$ then the above argument implies that the (1,3) entry and
(2,4) entry of $TA^2$ also sum to zero, giving us
$2\lambda\alpha+2\lambda\delta+2=0$ and these cannot simultaneously
hold.

Finally we take $a=0$ in Case 2, where we now can write our 4 by 4
matrices in 2 by 2 blocks, thus
\[T=\sma{cc}\lambda I_2&I_2\\0&\lambda I_2\fma,
A=\sma{cc}I_2&X\\0&I_2\fma\mbox{ so }
TA=\sma{cc}\lambda I_2&\lambda X+I_2\\0&\lambda I_2\fma.\]
On again looking for a conjugating matrix $B=\sma{cc}P&Q\\R&S\fma$ and setting
$B(TA-\lambda I_4)=(T-\lambda I_4)B$, we force $R=0$. As the inverse of
$B$ will now be $\sma{cc}P^{-1}&-P^{-1}QS^{-1}\\0&S^{-1}\fma$, we again
see that $A$ and $BAB^{-1}$ are upper unitriangular and so certainly
cannot generate a non abelian free group. 
\end{proof}

We thus leave with two related questions:\\
{\bf Question 1}: Is every free by cyclic group $F_n\rtimes_\alpha\Z$
linear over $\C$?\\
{\bf Question 2}: Is there $N>0$ such that every free by cyclic group 
$F_n\rtimes_\alpha\Z$ embeds in $GL(N,\C)$?\\
It might be that the Gersten group, and indeed all free by cyclic groups
are linear though it looks as if more and more complicated representations
would be required to achieve this. As we can form variations on the
Gersten group, for instance if $F_n$ is free on $x_1,\ldots ,x_n$ then
take the automorphism $\alpha(x_1)=x_1,\alpha(x_2)=x_2x_1,\ldots,
\alpha(x_n)=x_nx_1^{n-1}$, we think it unlikely that Question 2 has a
positive answer, even if Question 1 does.

\Address

\end{document}